\documentclass[reqno]{amsart}

\usepackage{amssymb}
\usepackage{graphicx}
\usepackage{amscd}
\usepackage[pagebackref]{hyperref}
\usepackage{color}
\usepackage{tabularx}
\usepackage[table]{xcolor}
\usepackage{float}
\usepackage{graphics,amsmath,amssymb}
\usepackage{amsthm}
\usepackage{amsfonts}
\usepackage{latexsym}
\usepackage{epsf}
\usepackage{xifthen}
\usepackage{mathrsfs}
\usepackage{dsfont}
\usepackage{makecell}
\usepackage{subfig}
\usepackage{amsmath}
\allowdisplaybreaks[4]
\usepackage{listings}
\usepackage{etoolbox}
\usepackage{fancyhdr}
\usepackage{pdflscape}
\usepackage[title,toc,titletoc]{appendix}
\usepackage{enumitem}
\usepackage[noadjust]{cite}
\usepackage{tikz}
\usetikzlibrary{automata,positioning,arrows}
\usepackage{young}
\usepackage[object=vectorian]{pgfornament} %%  http://altermundus.com/pages/tkz/ornament/index.html
\usepackage{lipsum,tikz}
\usepackage{multirow}
\usepackage[OT2,T1]{fontenc}
\usepackage{mathtools}

\hypersetup{
	colorlinks=true, %set true if you want colored links
	linktoc=all, %set to all if you want both sections and subsections linked
	linkcolor=blue} %choose some color if you want links to stand out

\numberwithin{equation}{section}

\theoremstyle{theorem}
\newtheorem{theorem}{Theorem}[section]
\newtheorem*{theorem*}{Theorem}

\newtheorem{corollary}[theorem]{Corollary}
\newtheorem{lemma}[theorem]{Lemma}

\newtheorem{conjecture}[theorem]{Conjecture}

\newtheorem{innercustomgeneric}{\customgenericname}
\providecommand{\customgenericname}{}
\newcommand{\newcustomtheorem}[2]{%
	\newenvironment{#1}[1]
	{%
		\renewcommand\customgenericname{#2}%
		\renewcommand\theinnercustomgeneric{##1}%
		\innercustomgeneric
	}
	{\endinnercustomgeneric}
}
\newcustomtheorem{ctheorem}{Theorem}
\newcustomtheorem{clemma}{Lemma}

\theoremstyle{definition}

\newtheorem*{example*}{Example}
\newtheorem*{examples*}{Examples}

\newtheorem*{remark*}{Remark}
\newtheorem*{remarks*}{Remarks}
\newtheorem*{note*}{Note}

\newtheoremstyle{named}{}{}{\itshape}{}{\bfseries}{.}{.5em}{#1\thmnote{ #3}}
\theoremstyle{named}

\makeatletter
\patchcmd{\subsection}{\bfseries}{\bfseries\boldmath}{}{}
\makeatother

\newcommand{\arxiv}[1]{\href{https://arxiv.org/abs/#1}{arXiv:#1}}

\newcommand{\qbinom}[2]{{#1\brack #2}}

\newcommand{\LHS}{\operatorname{LHS}}
\newcommand{\RHS}{\operatorname{RHS}}

\newcommand{\ii}{\operatorname{i}}
\newcommand{\ee}{\operatorname{e}}
\newcommand{\dd}{\operatorname{d}}

\newcommand{\cI}{\mathcal{I}}
\newcommand{\Mac}{\mathsf{Mac}}

\title[Fermionic formulas for the Macdonald index]{On conjectural fermionic formulas for the Macdonald index in Argyres--Douglas theories}

\author[S. Chern]{Shane Chern}
\address[S. Chern]{Fakult\"at f\"ur Mathematik, Universit\"at Wien, Oskar-Morgenstern-Platz 1, Wien 1090, Austria}
\email{chenxiaohang92@gmail.com, xiaohangc92@univie.ac.at}

\author[C. Tran]{Chanh Tran}
\address[C. Tran]{Prompt Inversion AI}
\email{chanh@promptinversion.ai}

\author[T. Wakhare]{Tanay Wakhare}
\address[T. Wakhare]{{}\textsuperscript{(1)}Department of Electrical Engineering and Computer Science, MIT, Cambridge, MA 02139, USA \newline \indent{}\textsuperscript{(2)}Prompt Inversion AI}
\email{twakhare@mit.edu}

\date{}

\keywords{Argyres--Douglas theory, Macdonald index, Bailey pair, conjugate Bailey pair, orthogonal polynomial, basic hypergeometric series.}

\subjclass[2020]{81T40, 11P84, 33D15, 33D50.}

\begin{document}
	
\sloppy

\begin{abstract}
	We prove a fermionic--bosonic duality relation for the Macdonald index in Argyres--Douglas theories of type $(A_1, D_{2k+1})$, thereby yielding a conjectural fermionic formula due to Andrews et al. Our duality is built upon a new conjugate Bailey pair to be established using techniques from orthogonal polynomials and basic hypergeometric series. In addition, this fermionic formula implies another sum-like expression independently conjectured by Andrews et al.~and Kim et al.~for the same Macdonald index.
\end{abstract}

\maketitle

\section{Introduction}

Originating in \cite{AD1995}, the \emph{Argyres--Douglas theories} are families of four-dimensional $\mathcal{N} = 2$ superconformal field theories to describe the interactions of massless, mutually non-local BPS particles. Geometrically, they are realized by compactifying six-dimensional $\mathcal{N} = (2,0)$ theories on a Riemann surface with irregular punctures \cite{WX2016, Xie2013}. In particular, these theories have marginal deformations and the Coulomb branch spectrum is of fractional scaling dimension; see \cite{BBMN2021, GHMM2025, GMS2021} for more background in physics and recent advances on this topic.

Due to the strong-coupling effects, no Lagrangian description exists in any duality frame for Argyres--Douglas theories, and there is a lack of \emph{direct} ways for computing the superconformal indices, such as Schur index, Hall--Littlewood index, and Macdonald index. For the latter challenge, Buican and Nishinaka~\cite{BN2016c} made the very first progress on the Macdonald index in $(A_1, A_{2k+1})$ and $(A_1, D_{2k})$ Argyres--Douglas theories. In addition, they~\cite{BN2016} proposed an Ansatz for the topological quantum description of Schur index, agreeing with the $S$-duality of Argyres--Douglas theories \cite{BGNP2015, XY2018}, and this construction was subsequently shown to be consistent with $S^1$ reductions of Argyres--Douglas theories \cite{BN2016b}. Alternatively, C\'ordova and Shao~\cite{CS2016} predicted the Schur index according to the correspondence between superconformal field theories and vertex operator algebras, and their results match those in \cite{BLL+2015}. Along these directions, Song~\cite{Son2016} made further headway on the computation of the three types of superconformal indices.

In this work, we focus on the Macdonald index in Argyres--Douglas theories of type $(A_1, D_{2k+1})$. Briefly speaking, the \emph{Macdonald index} $\cI_{\Mac}^{(2k+1)}(z,t;q)$, carrying fugacities $z$, $t$, and $q$, comes from a trace formula \cite{KMMR2007} for certain BPS states associated with the $(A_1, D_{2k+1})$ theory. This Macdonald index reduces to the Schur index at the $q = t$ case, and to the Hall--Littlewood index when $q = 0$.

There is a \emph{bosonic}\footnote{Bosonic and fermionic representations are two ways to describe the same physical system: the former treats the system as a collection of independent particles, usually leading to single sum-like expressions mathematically, while the latter looks at the exclusion principle of the particles, resulting in multisums.} sum-like representation for $\cI_{\Mac}^{(2k+1)}(z,t;q)$ \cite[p.~5, eq.~(2.7)]{KKS2026}:
\begin{align}\label{eq:Mac-bosonic}
	\cI_{\Mac}^{(2k+1)}(z,t;q) &= \frac{1}{(t,tz^2,tz^{-2};q)_\infty}\sum_{n\ge 0} (-1)^n t^{(k+1)n} q^{kn^2+\binom{n}{2}} \frac{(q^{n+1};q)_n (t^2q^{2n};q)_\infty}{(tq^n;q)_n (tq^{2n+1};q)_\infty}\notag\\
	&\quad\times \sum_{j=0}^{2n} \frac{(t;q)_j (t;q)_{2n-j}}{(q;q)_j (q;q)_{2n-j}}z^{2j-2n}.
\end{align}
Throughout, we adopt the conventional \emph{$q$-Pochhammer symbols}:
\begin{align*}
	(a;q)_\infty := \prod_{k\ge 0} (1-aq^k),\qquad\qquad (a;q)_n := \frac{(a;q)_\infty}{(aq^n;q)_\infty},
\end{align*}
with the compact notation
\begin{align*}
	(a_1,\ldots,a_M;q)_\infty &:= (a_1;q)_\infty \cdots (a_M;q)_\infty,\\
	(a_1,\ldots,a_M;q)_n &:= (a_1;q)_n \cdots (a_M;q)_n.
\end{align*}
In addition, we need the \emph{$q$-binomial coefficients}:
\begin{align*}
	\qbinom{M}{N}_q:=\begin{cases}
		\dfrac{(q;q)_M}{(q;q)_N(q;q)_{M-N}}, & \text{if $0\le N\le M$},\\[10pt]
		0, & \text{otherwise}.
	\end{cases}
\end{align*}

Recently, Andrews et al.~\cite{ABBST2025}, on the other hand, made the following conjecture\footnote{The original formulation in \cite[Conjecture~4]{ABBST2025} goes to \eqref{eq:ABBST-conj} after a simple substitution of indices.} about the \emph{fermionic} expression for $\cI_{\Mac}^{(2k+1)}(1,t;q)$.

\begin{conjecture}[Andrews et al.~{\cite[Conjecture~4]{ABBST2025}}]
	\begin{align}\label{eq:ABBST-conj}
		\cI_{\Mac}^{(2k+1)}(1,t;q) \overset{?}{=} \sum_{n_k\ge \cdots\ge n_1 \ge 0} \frac{t^{n_1+\cdots+n_{k}} q^{n_1^2+\cdots + n_{k-1}^2}}{(q;q)_{n_k-n_{k-1}}\cdots (q;q)_{n_2-n_1} (q;q)_{n_1}}\sum_{j=0}^{2n_k} \qbinom{2n_k}{j}_q.
	\end{align}
\end{conjecture}

Amongst basic hypergeometric series, especially (finite) Rogers--Ramanujan type identities \cite{Sil2003}, the \emph{fermionic--bosonic duality} plays a significant role. In principle, it refers to equalities of the form
\begin{align*}
	\text{``multiple $q$-summation} = \text{single $q$-summation.''}
\end{align*}
A typical example is
\begin{align*}
	\sum_{n_{k}\ge \cdots\ge n_1 \ge 0} \frac{q^{n_1^2+\cdots + n_{k}^2}}{(q;q)_{n_{k}-n_{k-1}}\cdots (q;q)_{n_2-n_1} (q;q)_{n_1}} = \frac{1}{(q;q)_\infty}\sum_{n= -\infty}^\infty (-1)^n q^{(k+1)n^2 + \binom{n}{2}},
\end{align*}
which produces Andrews' multiple generalization of the first Rogers--Ramanujan identity~\cite[p.~4082, Theorem~1]{And1974} after applying Jacobi's triple product~\cite[p.~357, eq.~(II.28)]{GR2004} to the bosonic sum on the right-hand side.

Now the first objective of this paper is to establish the following fermionic dual for \eqref{eq:Mac-bosonic}:
\begin{align}\label{eq:ABBST-z}
	\cI_{\Mac}^{(2k+1)}(z,t;q) = \sum_{n_k\ge \cdots\ge n_1 \ge 0} \frac{t^{n_1+\cdots+n_{k}} q^{n_1^2+\cdots + n_{k-1}^2}}{(q;q)_{n_k-n_{k-1}}\cdots (q;q)_{n_2-n_1} (q;q)_{n_1}}\sum_{j=0}^{2n_k} \qbinom{2n_k}{j}_q z^{2j - 2n_k},
\end{align}
which reduces to the conjectural formula of Andrews et al.~\cite[Conjecture~4]{ABBST2025}, that is, \eqref{eq:ABBST-conj}, at $z=1$. More precisely, we show the fermionic--bosonic duality:

\begin{theorem}\label{th:fermionic-bosonic-main}
	For every $k\ge 1$,
	\begin{align}\label{eq:fermionic-bosonic-main}
		&\sum_{n_k\ge \cdots\ge n_1 \ge 0} \frac{t^{n_1+\cdots+n_{k}} q^{n_1^2+\cdots + n_{k-1}^2}}{(q;q)_{n_k-n_{k-1}}\cdots (q;q)_{n_2-n_1} (q;q)_{n_1}}\sum_{j=0}^{2n_k} \qbinom{2n_k}{j}_q z^{2j - 2n_k}\notag\\
		&\qquad\qquad = \frac{1}{(t,tz^2,tz^{-2};q)_\infty}\sum_{n\ge 0} (-1)^n t^{(k+1)n} q^{kn^2+\binom{n}{2}} \frac{(q^{n+1};q)_n (t^2q^{2n};q)_\infty}{(tq^n;q)_n (tq^{2n+1};q)_\infty}\notag\\
		&\qquad\qquad\quad\times \sum_{j=0}^{2n} \frac{(t;q)_j (t;q)_{2n-j}}{(q;q)_j (q;q)_{2n-j}}z^{2j-2n}.
	\end{align}
\end{theorem}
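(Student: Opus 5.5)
Our plan is to run the Bailey machinery: a Bailey chain of length $k-1$ to generate the inner fermionic sum, combined with a new conjugate Bailey pair that absorbs the factor $\sum_j \qbinom{2n_k}{j}_q z^{2j-2n_k}$ together with the outermost variable $n_k$.

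\textbf{Step 1: Bailey chain.} Start from the unit Bailey pair relative to $a=1$, namely $\beta_n=\delta_{n,0}$ and $\alpha_n$ its Bailey transform. Apply the Bailey lemma $k-1$ times, with $\rho_1,\rho_2\to\infty$ at each step. The exponent $n_1+\cdots+n_{k-1}$ of $t$ is not produced by this specialization. We handle it by instead working relative to $a=t$, or by inserting $t$ through the chain parameters (taking $\rho_1\to\infty$, $\rho_2=$ finite so that $(\rho_1)_n(\rho_2)_n(aq/\rho_1\rho_2)^n$ yields $t^nq^{n^2}$). The result is a Bailey pair $(\alpha^{(k-1)}_N,\beta^{(k-1)}_N)$ with
\[
\beta^{(k-1)}_{N}=\sum_{N\ge n_{k-1}\ge\cdots\ge n_1\ge0}\frac{t^{n_1+\cdots+n_{k-1}}q^{n_1^2+\cdots+n_{k-1}^2}}{(q;q)_{N-n_{k-1}}\cdots(q;q)_{n_1}},
\]
up to normalization by $(q)_N$ or $(tq)_N$ factors. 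Here $\alpha^{(k-1)}_n$ is explicit, of the shape $(-1)^n t^{(k-1)n}q^{(k-1)n^2+\binom n2}\times(\text{rational factor in }t,q^n)$. Which base $a$ matches the factors $(q^{n+1};q)_n/(tq^n;q)_n$ in \eqref{eq:fermionic-bosonic-main} suggests that the natural base is $a=1$ with $t$ carried by the chain, or possibly base $t$ with argument $2n$. We would fix this by matching the $n=0$ and $k=1$ cases.

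\textbf{Step 2: Conjugate Bailey pair.} We need $(\delta_N,\gamma_n)$ with $\gamma_n=\sum_{N\ge n}\delta_N/\big((q)_{N-n}(aq)_{N+n}\big)$, where
\[
\delta_N=t^N\sum_{j=0}^{2N}\qbinom{2N}{j}_q z^{2j-2N}.
\]
We then want $\gamma_n$ to equal
\[
\frac{(-1)^n t^{2n}q^{n^2}(\text{stuff})}{(t,tz^2,tz^{-2};q)_\infty}\sum_j\frac{(t)_j(t)_{2n-j}}{(q)_j(q)_{2n-j}}z^{2j-2n},
\]
suitably normalized. Then $\sum\alpha_n\gamma_n=\sum\beta_N\delta_N$ gives \eqref{eq:fermionic-bosonic-main}. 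To compute $\gamma_n$, expand the inner $z$-sum coefficientwise in $z^{2m}$. For fixed $m$ the sum over $N$ becomes a ${}_2\phi_1$/${}_3\phi_2$-type series in $t$. Alternatively, recognize $\sum_j\qbinom{2N}{j}z^{2j}$ as a Rogers--Szeg\H{o} polynomial $H_{2N}$ and $\sum_j\frac{(t)_j(t)_{2n-j}}{(q)_j(q)_{2n-j}}z^{2j}$ as a continuous $q$-ultraspherical (Rogers) polynomial $C_{2n}(x;t\mid q)$. The conjugate pair then reduces to a connection/generating-function identity: expanding the weighted generating series $\sum_N t^N H_{2N}(z^2)/\big((q)_{N-n}(q)_{N+n}\big)$ in Rogers polynomials. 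The prefactor $1/(t,tz^2,tz^{-2};q)_\infty$ is exactly the Rogers--Szeg\H{o} generating function $\sum_N H_N t^N/(q)_N$ for even-index-averaged terms, which supports this route. Concretely, we would use the Rogers connection formula expressing $H_m$ in terms of $C_\ell(x;t\mid q)$, then sum the resulting series in $N$ by the $q$-Gauss or $q$-Chu--Vandermonde summation.

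\textbf{Step 3: Assemble and check.} Insert into the Bailey lemma and verify that $\alpha_n\gamma_n$ reproduces the summand, including $(q^{n+1})_n(t^2q^{2n})_\infty/\big((tq^n)_n(tq^{2n+1})_\infty\big)$. Check $k=1$ directly as a base case; there the left side is $\sum_N t^N\sum_j\qbinom{2N}{j}z^{2j-2N}/(q)_N$.

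\textbf{Main obstacle.} The main obstacle is Step 2, establishing the conjugate Bailey pair. The parity restriction (only even-degree $H_{2N}$) makes the generating-function argument nontrivial. We would first try splitting $\sum_N t^N H_{2N}$ into the even part of $\sum_M H_M$ (using $H_M(-x)$ symmetry with $t\mapsto\pm\sqrt t$). Failing that, we would prove the coefficientwise identity in $z$ by verifying that both sides satisfy the same three-term recurrence in $n$, via the orthogonal polynomial recurrence for $C_n(x;t\mid q)$.
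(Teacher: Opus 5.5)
The architecture matches the paper: a Bailey chain with $\rho_1,\rho_2\to\infty$ produces the multisum, and Bailey's transform pairs it with a new conjugate Bailey pair. In that pair, $\delta_N=t^NH_{2N}(z;q)$ is a $q$-Hermite polynomial, and $\gamma_n$ is a $q$-ultraspherical polynomial divided by $(t,tq,tz^2,tz^{-2};q)_\infty$. However, the bookkeeping in Step~1 is off in a way that corrupts your target in Step~2.

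\textbf{Step 1.} The base must be $a=t$. That is, the pairs are relative to $(t,q)$, with $(tq;q)_{N+n}$ in the conjugate relation, not the $(q;q)_{N+n}$ of your displayed series. Each step with $\rho_1,\rho_2\to\infty$ then contributes $(tq)^m q^{m^2-m}=t^mq^{m^2}$. You also need $k$ applications of Bailey's lemma, not $k-1$, to the unit pair $\beta_n=\delta_{n,0}$, $\alpha_n=(-1)^nq^{\binom n2}(1-tq^{2n})(t;q)_n/((1-t)(q;q)_n)$. Only then do you get the $k-1$ indices $n_1,\dots,n_{k-1}$, with $\beta^{(k)}_{n_k}$ exactly the inner multisum and no extra normalization. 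Moreover, $\alpha^{(k)}_n$ already carries $(-1)^nt^{kn}q^{kn^2+\binom n2}$. So $\gamma_n$ contributes no sign and no $q^{n^2}$, only the extra power $t^n$. Since $\gamma_n$ is determined by $\delta_N$ through \eqref{eq:Bailey-conj-gamma}, the shape $(-1)^nt^{2n}q^{n^2}(\cdots)$ you are aiming for does not exist. What actually has to be proved is
\begin{align*}
&\sum_{l\ge n}\frac{t^{l}}{(q;q)_{l-n}(tq;q)_{l+n}}\sum_{j=0}^{2l}\qbinom{2l}{j}_q z^{2j-2l}\\
&\qquad=\frac{t^{n}(q;q)_{2n}(t^2;q)_\infty}{(t^2;q)_{2n}(t,tq,tz^2,tz^{-2};q)_\infty}\sum_{j=0}^{2n}\frac{(t;q)_j(t;q)_{2n-j}}{(q;q)_j(q;q)_{2n-j}}z^{2j-2n}.
\end{align*}

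\textbf{Step 2.} This identity is the whole new content of the theorem, and your proposal only lists strategies for it. The one you describe most concretely does not close as stated. Substituting Rogers' connection formula for $H_{2l}$ in terms of the $C_{2m}(z,t;q)$ into the left side turns each coefficient of $C_{2m}$ into a nonterminating series in $t^2$. That series is a ${}_4\phi_3$, not a $q$-Gauss or $q$-Chu--Vandermonde sum. Meanwhile the right side, because of the factor $1/(tz^2,tz^{-2};q)_\infty$, is not a finite combination of the $C_{2m}$, so there is nothing explicit to match against.

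The missing idea is orthogonality. The weight of the $H_m$ is $(z^2,z^{-2};q)_\infty$, and the weight of the $C_m(z,t;q)$ is $(z^2,z^{-2};q)_\infty/(tz^2,tz^{-2};q)_\infty$. So one expands the \emph{right} side in the $H_{2l}$, and the coefficient $c_{n,l}$ is a multiple of $\int_0^\pi C_{2n}H_{2l}\,(z^2,z^{-2};q)_\infty/(tz^2,tz^{-2};q)_\infty\,\dd\theta$. The paper evaluates this integral using Ramanujan's ${}_1\psi_1$ sum, $q$-Pfaff--Saalsch\"utz, the reflection $j\mapsto 2l-j$, and the ${}_6\phi_5$ evaluation in Lemma~\ref{le:key-id-1}. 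It obtains $c_{n,l}=\frac{(t,tq;q)_\infty(t^2;q)_{2n}t^{l-n}}{(t^2;q)_\infty(q;q)_{2n}(q;q)_{l-n}(tq;q)_{l+n}}$, which vanishes for $l<n$.

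Your connection-formula instinct does work if used dually. By orthogonality of the $C_m(z,t;q)$, the integral equals the coefficient $\frac{(q;q)_{2l}\,t^{l-n}(1-tq^{2n})}{(q;q)_{l-n}(t;q)_{l+n+1}}$ of $C_{2n}(z,t;q)$ in $H_{2l}(z;q)$, times the norm of $C_{2n}$. This reproduces the same $c_{n,l}$ with no summation at all, a shorter route than the paper's. Either way, this is the step you would have to supply. The recurrence and coefficient-of-$z^{2m}$ alternatives are not developed far enough to count.
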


Andrews et al.~\cite[eq.~(26)]{ABST2025} and Kim et al.~\cite[p.~8, eq.~(2.26)]{KKS2026} independently conjectured another fermionic sum for $\cI_{\Mac}^{(2k+1)}(z,t;q)$. Our second purpose is to show that this formula, as stated below using an equivalent form:\footnote{The equivalence can be seen by renaming the summation indices in \cite[eq.~(26)]{ABST2025} or \cite[p.~8, eq.~(2.26)]{KKS2026} and invoking the Cartan matrix of the $D_{2k+1}$ Lie algebra. For details, see Appendix~\ref{appx:fer-2-original}.}
\begin{align}\label{eq:KKS-2.26}
	\cI_{\Mac}^{(2k+1)}(z,t;q) &= (t,q;q)_\infty^k \sum_{\substack{r_1,\ldots,r_k\ge 0\\s_1,\ldots,s_k\ge 0}} \frac{t^{\sum_{i=1}^k s_i} q^{\sum_{i=1}^k r_i(s_{i-1}+s_i+1)}}{(t,q;q)_{r_1}\cdots (t,q;q)_{r_k} (q;q)_{s_1}^2 \cdots (q;q)_{s_k}^2}\notag\\
	&\quad\times \sum_{u_1,u_2\ge 0} \qbinom{s_k}{u_1}_q \qbinom{s_k}{u_2}_q z^{2u_1-2u_2},
\end{align}
with $s_0:=0$, can be transformed to that in \eqref{eq:ABBST-z}, thereby linking all three expressions for the Macdonald index in Argyres--Douglas theories of type $(A_1, D_{2k+1})$. In other words, we prove the following identity:

\begin{theorem}\label{th:KKS-2.26-id}
	For every $k\ge 1$,
	\begin{align}\label{eq:KKS-2.26-id}
		&\sum_{n_k\ge \cdots\ge n_1 \ge 0} \frac{t^{n_1+\cdots+n_{k}} q^{n_1^2+\cdots + n_{k-1}^2}}{(q;q)_{n_k-n_{k-1}}\cdots (q;q)_{n_2-n_1} (q;q)_{n_1}}\sum_{j=0}^{2n_k} \qbinom{2n_k}{j}_q z^{2j - 2n_k}\notag\\
		&\qquad\qquad = (t,q;q)_\infty^k \sum_{\substack{r_1,\ldots,r_k\ge 0\\s_1,\ldots,s_k\ge 0}} \frac{t^{\sum_{i=1}^k s_i} q^{\sum_{i=1}^k r_i(s_{i-1}+s_i+1)}}{(t,q;q)_{r_1}\cdots (t,q;q)_{r_k} (q;q)_{s_1}^2 \cdots (q;q)_{s_k}^2}\notag\\
		&\qquad\qquad\quad\times \sum_{u_1,u_2\ge 0} \qbinom{s_k}{u_1}_q \qbinom{s_k}{u_2}_q z^{2u_1-2u_2},
	\end{align}
	where we put $s_0:=0$.
\end{theorem}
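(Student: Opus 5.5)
The plan is to prove \eqref{eq:KKS-2.26-id} by induction on $k$. Both sides are built from a nested chain in which each new index is attached to the previous one through a kernel depending only on adjacent indices. The strategy is to match the two kernels one level at a time.

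\emph{Step 1: eliminate the $r_i$.} Writing $1/(t;q)_r=(tq^r;q)_\infty/(t;q)_\infty$ and expanding $(tq^r;q)_\infty$ by Euler's formula, then summing over $r$ with the $q$-binomial theorem, gives for every integer $a\ge 0$
\begin{align*}
(t,q;q)_\infty\sum_{r\ge0}\frac{q^{r(a+1)}}{(t,q;q)_r}=\sum_{m\ge0}\frac{(-t)^m q^{\binom m2}(q;q)_{m+a}}{(q;q)_m}.
\end{align*}
Apply this with $a=s_{i-1}+s_i$ for each $i$. This absorbs the prefactor $(t,q;q)_\infty^k$, and the right-hand side becomes a chain in $s_1,\dots,s_k$ whose adjacent-index kernel is
\begin{align*}
K(s_{i-1},s_i)=\frac{t^{s_i}}{(q;q)_{s_i}^2}\sum_{m\ge0}\frac{(-t)^mq^{\binom m2}(q;q)_{m+s_{i-1}+s_i}}{(q;q)_m}.
\end{align*}
The last factor $(q;q)_{s_k}^{-2}$ stays paired with the $z$-polynomial $\sum_{u_1,u_2}\qbinom{s_k}{u_1}_q\qbinom{s_k}{u_2}_q z^{2u_1-2u_2}$.

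\emph{Step 2: separate the $z$-dependence.} Extract the coefficient of $z^{2\ell}$ on both sides. On the left this is $\qbinom{2n_k}{n_k+\ell}_q$. On the right it is the non-weighted convolution $\sum_u \qbinom{s_k}{u+\ell}_q\qbinom{s_k}{u}_q$, which is not a single $q$-binomial coefficient. So the identity cannot be proved termwise in the top index. Instead, I would prove a one-step transfer lemma: for fixed $N$ and a suitable sequence $f$,
\begin{align*}
\sum_{s\ge0}K(N,s)\,\frac{f(s)}{(q;q)_s^{-2}}\ \text{(top layer of the RHS chain)}
=\sum_{n\ge N}\frac{t^{n}}{(q;q)_{n-N}}\,g_\ell(n)\ \text{(top layer of the LHS chain)},
\end{align*}
where $g_\ell(n)=\qbinom{2n}{n+\ell}_q$ and the lower layers of both chains are handled by the induction hypothesis.

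\emph{Step 3: organise the induction through the bosonic side.} A cleaner route, and the one I would try first, is to avoid a direct kernel match. I would show that the right-hand side of \eqref{eq:KKS-2.26-id} also equals the bosonic sum on the right of \eqref{eq:fermionic-bosonic-main}, and then invoke Theorem~\ref{th:fermionic-bosonic-main}. Concretely, the kernel $K$ from Step 1 is a Bailey-lemma-type kernel: its $m$-sum is a limiting ${}_2\phi_1$ that can be transformed by Heine's transformation into the form $\sum_j q^{j^2}t^j/(q;q)_{n-j}$, which is exactly the $n_i$-step on the left. Iterating this $k-1$ times converts the $s_1,\dots,s_{k-1}$ chain into the $n_1,\dots,n_{k-1}$ chain. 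The main obstacle is the top level: after the conversion one must show
\begin{align*}
\sum_{s}\frac{(\cdots)}{(q;q)_s^2}\sum_u\qbinom{s}{u+\ell}_q\qbinom{s}{u}_q=\sum_n\frac{(\cdots)}{(q;q)_{n-N}}\qbinom{2n}{n+\ell}_q .
\end{align*}
I would attack this by writing both sides as the same Bailey pair relative to $a=1$. On the left, $\qbinom{2n}{n+\ell}_q/(q;q)_{2n}$ is the standard $\beta_n$ with $\alpha_n$ supported at $n=|\ell|$. On the right, $(q;q)_s^{-2}\sum_u\qbinom{s}{u+\ell}_q\qbinom{s}{u}_q$ should be recognised, after the $m$-sum from Step 1, as a $t$-deformed Bailey pair with the same $\alpha$. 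Verifying this is a terminating $q$-Pfaff--Saalschütz / $q$-Chu--Vandermonde computation, which I expect to be the hardest part of the argument.
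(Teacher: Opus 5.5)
Your Step~1 is correct and coincides with the paper's first step (the paper gets the same formula from Heine's transformation). After that the proposal does not contain an argument. The transfer lemma of Step~2 is never formulated, and the detour through the bosonic sum of Theorem~\ref{th:fermionic-bosonic-main} is announced but not carried out; it would need its own argument, which you do not supply. The claim under ``Concretely'' also does not hold up. Applying Heine to the single $m$-sum merely undoes Step~1. Moreover, the shape $\sum_j q^{j^2}t^j/(q;q)_{n-j}$ does not come from one level's $m$-sum: in the actual computation the factor $q^{n_i^2}$ appears only after summing out $s_i$, which couples the $m$-sum of level $i$ with the factor $(q;q)_{m_{i+1}+s_i+s_{i+1}}$ of level $i+1$.

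The missing idea is the reindexing $n_i:=m_i+s_i$, where $m_i$ is your Step-1 variable at level $i$. Then $(q;q)_{m_i+s_{i-1}+s_i}=(q;q)_{s_{i-1}+n_i}$. For fixed $n_1,\dots,n_k$ the sum over $(s_1,\dots,s_k)$ therefore factorizes into $k$ independent finite sums over $0\le s_i\le n_i$, each of which closes up:
\begin{align*}
\sum_{s=0}^{n}\frac{(-1)^{n-s}q^{\binom{n-s}{2}}(q;q)_{s+n'}}{(q;q)_s^2\,(q;q)_{n-s}}&=\frac{q^{n^2}(q;q)_{n'}}{(q;q)_n}\qbinom{n'}{n}_q,\\
\sum_{s=0}^{n}\frac{(-1)^{n-s}q^{\binom{n-s}{2}}}{(q;q)_s^2\,(q;q)_{n-s}}\,H_s(z;q)^2&=\frac{H_{2n}(z;q)}{(q;q)_n^2},
\end{align*}
where $H_s(z;q)^2=\sum_{u_1,u_2}\qbinom{s}{u_1}_q\qbinom{s}{u_2}_q z^{2u_1-2u_2}$.

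The first identity handles levels $i<k$ with $n'=n_{i+1}$. It is the second $q$-Chu--Vandermonde sum, and its vanishing for $n>n'$ is what produces the ordering $n_1\le\cdots\le n_k$. The second identity handles the top level $i=k$. Multiplying by $t^{n_1+\cdots+n_k}(q;q)_{n_1}$, the product telescopes to the summand on the left of \eqref{eq:KKS-2.26-id}. No induction on $k$, no Bailey pairs and no bosonic side are needed.

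This also shows that your Step~2 diagnosis points the wrong way. The identity is not termwise in $s_k$, but it is termwise in $n_k=m_k+s_k$. The convolution $\sum_u\qbinom{s}{u+\ell}_q\qbinom{s}{u}_q$, which you rightly single out as the crux, is governed by the linearization formula for the continuous $q$-Hermite (Rogers--Szeg\H{o}) polynomials:
\begin{align*}
\frac{H_s(z;q)^2}{(q;q)_s^2}=\sum_{j=0}^{s}\frac{H_{2j}(z;q)}{(q;q)_{s-j}\,(q;q)_j^2}.
\end{align*}
This is a one-sided transform with kernel $1/(q;q)_{s-j}$, which the $q$-binomial theorem inverts, and that inversion is exactly the top-level evaluation above. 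The relation you need is this transform, not a Bailey pair relative to $a=1$ with $\beta_n=\qbinom{2n}{n+\ell}_q/(q;q)_{2n}$. So the Pfaff--Saalsch\"utz computation you anticipate in Step~3 is aimed at the wrong relation.
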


This paper is structured as follows. First, in Section~\ref{sec:Bailey-sum}, we review Bailey's method on summations. Then a new conjugate Bailey pair is constructed in Section~\ref{sec:conjugate-Bailey}. In this course, techniques from orthogonal polynomials and basic hypergeometric series are needed, including a basic hypergeometric evaluation to be established in Appendix~\ref{appx:evaluation}. Next, we choose a proper seed Bailey pair in Section~\ref{sec:fermionic-bosonic} to conclude a strengthening of Theorem~\ref{th:fermionic-bosonic-main}. In Section~\ref{sec:KKS-2.26}, we prove Theorem~\ref{th:KKS-2.26-id}. Finally, we provide a well-poised generalization of our new conjugate Bailey pair as closing remarks in Section~\ref{sec:closing}.

\section{Summations \`a la Bailey}\label{sec:Bailey-sum}

In this section, we briefly recall Bailey's theory on summations \cite{Bai1948}, which is streamlined in \cite[Chapter~3]{And1986}. Let $(u_n)_{n\ge 0}$ and $(v_n)_{n\ge 0}$ be generic sequences. Subject to suitable convergence conditions, we assume that sequences $(\alpha_n)_{n\ge 0}$, $(\beta_n)_{n\ge 0}$, $(\gamma_n)_{n\ge 0}$, and $(\delta_n)_{n\ge 0}$ satisfy the following relations for all $n\ge 0$:
\begin{align*}
	\beta_n = \sum_{l=0}^n u_{n-l} v_{n+l} \alpha_l,
\end{align*}
and
\begin{align*}
	\gamma_n = \sum_{l\ge n} u_{l-n} v_{l+n} \delta_l.
\end{align*}
Then \emph{Bailey's transform} \cite[p.~24, Theorem~3.1]{And1986} asserts that
\begin{align}\label{eq:Bailey-trans}
	\sum_{n\ge 0} \alpha_n \gamma_n = \sum_{n\ge 0} \beta_n \delta_n.
\end{align}
A typical choice of the sequences $u_n$ and $v_n$ is
\begin{align*}
	u_n = \frac{1}{(q;q)_n} \qquad \text{and} \qquad v_n = \frac{1}{(tq;q)_n},
\end{align*}
where $t$ is an indeterminate. We say sequences $\alpha_n$ and $\beta_n$ form a \emph{Bailey pair} relative to $(t,q)$ \cite[p.~25, eq.~(3.27)]{And1986} if for each $n\ge 0$,
\begin{align}\label{eq:Bailey-beta}
	\beta_n = \sum_{l=0}^n \frac{\alpha_l}{(q;q)_{n-l} (tq; q)_{n+l}}.
\end{align}
We may boost this Bailey pair to a \emph{Bailey chain} \cite[p.~29, eq.~(3.38)]{And1986}. To be precise, for each $k\ge 1$, the sequences
\begin{align}\label{eq:Bailey-chain-k-a}
	\alpha_n^{(k)} &:= \frac{(b_1^{-1},c_1^{-1},\ldots,b_k^{-1},c_k^{-1};q)_n (b_1c_1\cdots b_kc_k)^n}{(b_1tq,c_1tq,\ldots,b_ktq,c_ktq;q)_n}\cdot (tq)^{kn} \alpha_n
\end{align}
and
\begin{align}\label{eq:Bailey-chain-k-b}
	\beta_n^{(k)} &:= \sum_{0 \le n_0\le \cdots\le n_{k-1}\le n} \frac{(tq)^{n_0+\cdots+n_{k-1}} \beta_{n_0}}{(q;q)_{n-n_{k-1}}(q;q)_{n_{k-1}-n_{k-2}}\cdots (q;q)_{n_1-n_0}}\notag\\
	&\quad\ \times \frac{(b_kc_ktq;q)_{n-n_{k-1}}(b_{k-1}c_{k-1}tq;q)_{n_{k-1}-n_{k-2}}\cdots (b_1c_1tq;q)_{n_1-n_0}}{(b_1tq,c_1tq;q)_{n_1}\cdots (b_{k-1}tq,c_{k-1}tq;q)_{n_{k-1}}(b_{k}tq,c_{k}tq;q)_{n}}\notag\\
	&\quad\ \times (b_1^{-1},c_1^{-1};q)_{n_0}\cdots (b_k^{-1},c_k^{-1};q)_{n_{k-1}}(b_1 c_1)^{n_0} \cdots (b_kc_k)^{n_{k-1}}
\end{align}
also form a Bailey pair relative to $(t,q)$. This comes from iterations of \emph{Bailey's lemma} \cite[pp.~25--26, Theorem~3.3]{And1986}. Finally, the \emph{conjugate Bailey pair} $(\gamma_n,\delta_n)$ relative to $(t,q)$ is such that
\begin{align}\label{eq:Bailey-conj-gamma}
	\gamma_n = \sum_{l\ge n} \frac{\delta_l}{(q;q)_{l-n} (tq; q)_{l+n}}.
\end{align}
Under different choices of Bailey and conjugate Bailey pairs, we are taken to a realm of summation formulas, with Bailey's transform recalled.

\section{New conjugate Bailey pair}\label{sec:conjugate-Bailey}

Conjugate Bailey pairs are not as well explored as Bailey pairs; see the work of Schilling and Warnaar~\cite{SW2002} as an instance. Now our task is to establish the following new conjugate Bailey pair.

\begin{theorem}\label{th:conjugate-Bailey}
	The sequences
	\begin{align}
		\gamma_n = \frac{t^n (q;q)_{2n} (t^2;q)_\infty}{(t^2;q)_{2n} (t,tq,tz,tz^{-1};q)_\infty} \sum_{j=0}^{2n} \frac{(t;q)_j (t;q)_{2n-j}}{(q;q)_j (q;q)_{2n-j}}z^{j-n}
	\end{align}
	and
	\begin{align}
		\delta_n = t^n \sum_{j=0}^{2n} \qbinom{2n}{j}_q z^{j-n}
	\end{align}
	form a conjugate Bailey pair relative to $(t,q)$.
\end{theorem}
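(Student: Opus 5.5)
The plan is to identify both sides of \eqref{eq:Bailey-conj-gamma} with expansions in classical $q$-orthogonal polynomials in the variable $z$, and then compare coefficients in a suitable basis. Write $z=e^{2\ii\theta}$ and $x=\cos\theta$. Then the inner sum in $\delta_n$ becomes
\[
\sum_{j=0}^{2n}\qbinom{2n}{j}_q z^{j-n}=H_{2n}(x\,|\,q),
\]
the continuous $q$-Hermite polynomial. Similarly, the inner sum in $\gamma_n$, namely $\sum_{j}\frac{(t;q)_j(t;q)_{2n-j}}{(q;q)_j(q;q)_{2n-j}}z^{j-n}$, is the continuous $q$-ultraspherical (Rogers) polynomial $C_{2n}(x;t\,|\,q)$. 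So the statement to prove is
\begin{align*}
\frac{t^n(q;q)_{2n}(t^2;q)_\infty}{(t^2;q)_{2n}(t,tq,tz,tz^{-1};q)_\infty}\,C_{2n}(x;t\,|\,q)
=\sum_{l\ge n}\frac{t^l\,H_{2l}(x\,|\,q)}{(q;q)_{l-n}(tq;q)_{l+n}} .
\end{align*}

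\textbf{Step 1 (change of basis).} I expand each $H_{2l}$ in the basis $\{C_{2m}(x;t\,|\,q)\}_{m\ge 0}$ using Rogers' connection formula
\[
H_{N}(x\,|\,q)=\sum_{k=0}^{\lfloor N/2\rfloor}\frac{(q;q)_N\, t^k(1-tq^{N-2k})}{(q;q)_k(t;q)_{N-k+1}}\,C_{N-2k}(x;t\,|\,q).
\]
Only even indices occur, because $N=2l$ is even.

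\textbf{Step 2 (expand the left side).} The left side contains the nonpolynomial factor $1/(tz,tz^{-1};q)_\infty$. I expand it using the generating function of the Rogers polynomials:
\[
\frac{(t^2;q)_\infty}{(tz,tz^{-1};q)_\infty}=\frac{(t^2;q)_\infty}{(te^{2\ii\theta},te^{-2\ii\theta};q)_\infty}.
\]
An equivalent route is to compute the $C_{2m}$-coefficient of the left side by integrating against the orthogonality weight of $C_{2m}(x;t\,|\,q)$. If the argument mismatch between $e^{2\ii\theta}$ and $e^{\pm\ii\theta}$ is awkward, I would instead expand directly in powers of $z$, comparing the coefficient of $z^m$ on both sides.

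\textbf{Step 3 (reduce to a scalar identity).} After swapping the order of summation, both sides become series $\sum_m c_m\, C_{2m}$ (or, in the $z$-coefficient version, series in $z^m$). On the right, the coefficient is a single sum over $l\ge\max(n,m)$ of products of $q$-Pochhammer quotients. The required equality thus reduces, for each pair $(n,m)$, to a nonterminating basic hypergeometric evaluation. That sum must equal the corresponding left-hand coefficient, which is built from the infinite products $(t^2;q)_\infty/(t,tq;q)_\infty$ together with the $C_{2n}$ factor.

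\textbf{Main obstacle.} The hardest step is this final evaluation. In the $z$-coefficient formulation it includes a nontrivial vanishing for $|m|>n$: the coefficients $\qbinom{2l}{l+m}_q$ contribute for all $l\ge |m|$, yet their weighted sum must cancel against the left side. I would first try to write the $l$-sum as a ${}_2\phi_1$ or ${}_3\phi_2$ with argument related to $tq$. I would then apply the $q$-Gauss sum, or Heine's transformation followed by $q$-Gauss, to produce the infinite products $(t^2;q)_\infty/(t,tq;q)_\infty$. If that fails, I would reduce it to a very-well-poised ${}_6\phi_5$ sum. This is presumably where the paper's appendix evaluation is needed, and I would aim to establish exactly such a summation there, for example by induction on $n$ or through a $q$-contiguous relation.
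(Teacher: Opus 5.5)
Your reformulation is right: with $z=e^{2\ii\theta}$ the inner sums are $H_{2n}(x|q)$ and $C_{2n}(x;t|q)$. Your Step 1 connection formula is also correct. But the proposal proves nothing beyond that, because the whole content of the theorem sits in Steps 2--3, which are left as a list of things to try.

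\textbf{Step 2 does not go through as written.}
\begin{itemize}
\item As you noticed, the Rogers generating function produces $(re^{\pm\ii\theta};q)_\infty$, not $(te^{\pm2\ii\theta};q)_\infty$.
\item Your \emph{equivalent route} of extracting $C_{2m}(x;t|q)$-coefficients of the left side means integrating $C_{2n}C_{2m}$ against $(e^{2\ii\theta},e^{-2\ii\theta};q)_\infty/(te^{2\ii\theta},te^{-2\ii\theta};q)_\infty^2$. This weight has a doubled denominator, is not a classical weight, and you give no way to evaluate it.
\end{itemize}

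\textbf{The $z$-coefficient fallback is no better.}
\begin{itemize}
\item Because of the factor $1/(tz,tz^{-1};q)_\infty$, the left side is a genuine Laurent series. So neither side has vanishing $z^m$-coefficients for $|m|>n$. The vanishing actually encoded in the theorem is that the $H_{2l}$-coefficients vanish for $l<n$.
\item Each left-hand coefficient is $(t^2;q)_\infty/(t,tq;q)_\infty$ times a finite combination of nonterminating $q$-Bessel-type series.
\item The scalar identity of Step 3 is never written down, and \emph{try $q$-Gauss, or else a ${}_6\phi_5$} is not an argument.
\end{itemize}

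\textbf{The missing idea is to project onto the $q$-Hermite basis, not onto $C_{2m}(x;t|q)$ or powers of $z$.}
\begin{itemize}
\item The right side is already $\sum_l d_lH_{2l}$ with $d_l=t^l/((q;q)_{l-n}(tq;q)_{l+n})$. So it suffices to show that the $H_{2l}$-coefficient of the left side equals $d_l$. Compute that coefficient with $\int_0^\pi H_mH_{m'}(e^{2\ii\theta},e^{-2\ii\theta};q)_\infty\,\dd\theta=2\pi(q;q)_m\delta_{m,m'}/(q;q)_\infty$.
\item The Hermite weight times the factor $1/(te^{2\ii\theta},te^{-2\ii\theta};q)_\infty$ on the left is exactly the Rogers weight $w_t$. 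Hence that coefficient is a constant times $\int_0^\pi C_{2n}(x;t|q)H_{2l}(x|q)w_t\,\dd\theta$.
\item Insert your Step 1 expansion of $H_{2l}$ and the Rogers norm $2\pi(1-t)(t^2;q)_{2n}(t,tq;q)_\infty/\big((1-tq^{2n})(q;q)_{2n}(q,t^2;q)_\infty\big)$. The integral is $0$ for $l<n$. For $l\ge n$, after multiplying by the prefactor and using $(t;q)_{l+n+1}=(1-t)(tq;q)_{l+n}$, the coefficient is exactly $d_l$.
\item Completeness of $\{H_m\}$ and uniform convergence of the right side for $|t|<1$ then finish the proof.
\end{itemize}

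This is the paper's framework: it also expands $C_{2n}/(tz^2,tz^{-2};q)_\infty$ in $H_{2l}$ via Hermite orthogonality. The paper, however, evaluates the resulting integral by direct computation. It expands $H_{2l}$ into monomials, then uses Ramanujan's ${}_1\psi_1$, $q$-Pfaff--Saalsch\"utz, a reflection symmetry, and the ${}_6\phi_5$ sum of Lemma~\ref{le:key-id-1}. The connection formula you already wrote down makes that computation immediate.
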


\begin{proof}
	Let $z:=\ee^{\ii \theta}$. Recall the \emph{continuous $q$-Hermite polynomials} \cite[p.~320, eq.~(13.1.7)]{Ism2009} (or essentially the \emph{Rogers--Szeg\H{o} polynomials}; see \cite[p.~49, Example~3]{And1998}):
	\begin{align}\label{eq:q-Hermite-def}
		H_n(z;q) := \sum_{j=0}^n \qbinom{n}{j}_q z^{n-2j},
	\end{align}
	and the \emph{continuous $q$-ultraspherical polynomials} \cite[p.~326, eq.~(13.2.1)]{Ism2009}:
	\begin{align}\label{eq:q-ultraspherical-def}
		C_n(z,t;q) := \sum_{j=0}^n \frac{(t;q)_j (t;q)_{n-j}}{(q;q)_j (q;q)_{n-j}} z^{n-2j}.
	\end{align}
	
	It is a standard result that these continuous $q$-Hermite polynomials form an orthogonal basis of symmetric Laurent polynomials in $z$. As such, we may assume that $c_{n,l}$ are coefficients in the infinite expansion of the symmetric bilateral even series
	\begin{align}\label{eq:C-expansion}
		\frac{C_{2n}(z,t;q)}{(t z^2, t z^{-2};q)_\infty} = \sum_{l\ge 0} c_{n,l}\,  H_{2l}(z;q).
	\end{align}
	In particular, we do not have odd-indexed $q$-Hermite polynomials in this expansion because they are odd functions in $z$. Recall the \emph{orthogonality} of continuous $q$-Hermite polynomials \cite[p.~320, Theorem~13.1.3]{Ism2009}:
	\begin{align*}
		\int_0^\pi H_n(z;q) H_{n'}(z;q) (z^2, z^{-2};q)_\infty \dd\theta = \frac{2\pi (q; q)_n}{(q; q)_\infty} \delta_{n,n'},
	\end{align*}
	where $\delta_{n,n'}$ is the \emph{Kronecker delta}. Therefore,
	\begin{align*}
		c_{n,l} = \frac{(q;q)_\infty}{2 (q;q)_{2l}} \int_0^\pi C_{2n}(z,t;q) H_{2l}(z;q) \frac{(z^2, z^{-2};q)_\infty}{(t z^2, t z^{-2};q)_\infty} \frac{\dd\theta}{\pi}.
	\end{align*}
	Using \eqref{eq:q-Hermite-def} for $H_{2l}(z;q)$, we further write $c_{n,l}$ as
	\begin{align}\label{eq:c-expression-int}
		c_{n,l} = \frac{(q;q)_\infty}{2 (q;q)_{2l}} \sum_{j=0}^{2l} \qbinom{2l}{j}_q \int_0^\pi z^{2l-2j} C_{2n}(z,t;q) \frac{(z^2, z^{-2};q)_\infty}{(t z^2, t z^{-2};q)_\infty} \frac{\dd\theta}{\pi}.
	\end{align}
	
	For the integral
	\begin{align}\label{eq:Imn-def}
		I_{m,n} := \int_0^\pi z^{-2 (m+n)} C_{2n}(z,t;q) \frac{(z^2, z^{-2};q)_\infty}{(t z^2, t z^{-2};q)_\infty} \frac{\dd\theta}{\pi},
	\end{align}
	we apply \eqref{eq:q-ultraspherical-def} for $C_{2n}(z,t;q)$ and then invoke \emph{Ramanujan's ${}_{1} \psi_1$ sum} \cite[p.~357, eq.~(II.29)]{GR2004}:
	\begin{align*}
		\sum_{k=-\infty}^\infty \frac{(a;q)_k z^k}{(b;q)_k} = \frac{(q,b/a,az,q/(az);q)_\infty}{(b,q/a,z,b/(az);q)_\infty}
	\end{align*}
	with $(a,b,z)\mapsto (t^{-1},t,tz^2)$ so that
	\begin{align*}
		\frac{(z^2, z^{-2};q)_\infty}{(t z^2, t z^{-2};q)_\infty} = \frac{(t,tq;q)_\infty (1-z^{-2})}{(q,t^2;q)_\infty} \sum_{k=-\infty}^{\infty} \frac{(t^{-1};q)_k t^k z^{2k}}{(t;q)_k}.
	\end{align*}
	Hence, $I_{m,n}$ equals
	\begin{align*}
		\frac{(t,tq;q)_\infty}{(q,t^2;q)_\infty} \sum_{j=0}^{2n} \frac{(t;q)_j (t;q)_{2n-j}}{(q;q)_j (q;q)_{2n-j}} \sum_{k=-\infty}^{\infty} \frac{(t^{-1};q)_k t^k}{(t;q)_k} \int_0^\pi z^{-2 (m+j-k)} (1-z^{-2}) \frac{\dd\theta}{\pi},
	\end{align*}
	yielding the relation
	\begin{align}\label{eq:I-expression}
		I_{m,n} = \frac{(t,tq;q)_\infty}{(q,t^2;q)_\infty} \big(S_{m,n} - S_{m+1,n}\big),
	\end{align}
	where
	\begin{align*}
		S_{d,n} := \sum_{j=0}^{2n} \frac{(t;q)_j (t;q)_{2n-j} (t^{-1};q)_{j+d} t^{j+d}}{(q;q)_j (q;q)_{2n-j} (t;q)_{j+d}}.
	\end{align*}
	
	Now we write $S_{d,n}$ in terms of a ${}_3 \phi_{2}$ series
	\begin{align*}
		S_{d,n} = \frac{(t;q)_{2n} (t^{-1};q)_d t^d}{(q;q)_{2n} (t;q)_d}\, {}_3 \phi_{2}\left(\begin{matrix}
			t,t^{-1}q^d,q^{-2n}\\
			tq^d,t^{-1}q^{1-2n}
		\end{matrix};q,q\right),
	\end{align*}
	and further evaluate it as
	\begin{align}\label{eq:S-expression}
		S_{d,n} = \frac{(t^2;q)_{2n} (q^{d};q)_{2n} (t^{-1};q)_d t^d}{(q;q)_{2n} (t;q)_{2n+d}},
	\end{align}
	where we have applied the \emph{$q$-Pfaff--Saalsch\"utz sum} \cite[p.~355, eq.~(II.12)]{GR2004}:
	\begin{align*}
		{}_3 \phi_{2}\left(\begin{matrix}
			a,b,q^{-n}\\
			c,abq^{1-n}/c
		\end{matrix};q,q\right) = \frac{(c/a,c/b;q)_n}{(c,c/(ab);q)_n}.
	\end{align*}
	
	Recall from \eqref{eq:c-expression-int} and \eqref{eq:I-expression} that
	\begin{align*}
		c_{n,l} &= \frac{(q;q)_\infty}{2 (q;q)_{2l}} \sum_{j=0}^{2l} \qbinom{2l}{j}_q I_{j-l-n,n}\\
		&= \frac{(t,tq;q)_\infty}{2(t^2;q)_\infty (q;q)_{2l}} \sum_{j=0}^{2l} \qbinom{2l}{j}_q \big(S_{j-l-n,n} - S_{j-l-n+1,n}\big).
	\end{align*}
	For every $j$ with $0\le j\le 2l$, write $j' := 2l-j$. Our key observation is that for $j$ in this range, it is always true that
	\begin{align*}
		\qbinom{2l}{j}_q S_{j-l-n,n} = - \qbinom{2l}{j'}_q S_{j'-l-n+1,n}.
	\end{align*} 
	Sum the above for $j$ with $0\le j\le 2l$ so that $j'$ also runs from $0$ to $2l$. Thus,
	\begin{align}\label{eq:S-symmetry}
		\sum_{j=0}^{2l} \qbinom{2l}{j}_q S_{j-l-n,n} = - \sum_{j=0}^{2l}  \qbinom{2l}{j}_q S_{j-l-n+1,n}.
	\end{align}
	Consequently,
	\begin{align*}
		c_{n,l} &= \frac{(t,tq;q)_\infty}{(t^2;q)_\infty (q;q)_{2l}} \sum_{j=0}^{2l} \qbinom{2l}{j}_q S_{j-l-n,n}\\
		&= \frac{(t,tq;q)_\infty (t^2;q)_{2n}}{(t^2;q)_\infty (q;q)_{2n}} \sum_{j=0}^{2l} \frac{(q^{j-l-n};q)_{2n} (t^{-1};q)_{j-l-n} t^{j-l-n}}{(q;q)_j (q;q)_{2l-j} (t;q)_{j-l+n}},
	\end{align*}
	where \eqref{eq:S-expression} has been utilized. It then follows from \eqref{eq:key-id-1} that
	\begin{align*}
		c_{n,l} = \frac{(t,tq;q)_\infty (t^2;q)_{2n} t^{l-n}}{(t^2;q)_\infty (q;q)_{2n} (q;q)_{l-n} (tq;q)_{l+n}}.
	\end{align*}
	In particular,
	\begin{align*}
		c_{n,l} = 0, \qquad (0\le l\le n-1).
	\end{align*}
	
	Finally, substituting these $c_{n,l}$ into \eqref{eq:C-expansion}, we have
	\begin{align*}
		\frac{t^n (q;q)_{2n} (t^2;q)_\infty}{(t^2;q)_{2n} (t,tq,tz^2,tz^{-2};q)_\infty} C_{2n}(z,t;q) = \sum_{l\ge n} \frac{t^l}{(q;q)_{l-n} (tq;q)_{l+n}} H_{2l}(z;q).
	\end{align*}
	Recalling \eqref{eq:Bailey-conj-gamma}, this exactly confirms the claimed conjugate Bailey pair under the change of variables $z \mapsto z^{-1/2}$.
\end{proof}

\begin{corollary}
	For any Bailey pair $(\alpha_n,\beta_n)$ relative to $(t,q)$, we have
	\begin{align}\label{eq:Bailey-special}
		&\sum_{n\ge 0} \beta_n\cdot t^n \sum_{j=0}^{2n} \qbinom{2n}{j}_q z^{j-n}\notag\\
		&\qquad = \frac{(t^2;q)_\infty}{(t,tq,tz,tz^{-1};q)_\infty} \sum_{n\ge 0} \alpha_n \cdot \frac{t^n (q;q)_{2n}}{(t^2;q)_{2n}} \sum_{j=0}^{2n} \frac{(t;q)_j (t;q)_{2n-j}}{(q;q)_j (q;q)_{2n-j}}z^{j-n}.
	\end{align}
\end{corollary}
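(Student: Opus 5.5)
This is a direct specialization of Bailey's transform \eqref{eq:Bailey-trans} with $u_n = 1/(q;q)_n$ and $v_n = 1/(tq;q)_n$. With these choices, the hypothesis that $(\alpha_n,\beta_n)$ is a Bailey pair relative to $(t,q)$ is exactly \eqref{eq:Bailey-beta}. Theorem~\ref{th:conjugate-Bailey} supplies sequences $(\gamma_n,\delta_n)$ satisfying \eqref{eq:Bailey-conj-gamma} with the same $u,v$. Hence \eqref{eq:Bailey-trans} gives
\begin{align*}
\sum_{n\ge 0}\alpha_n\gamma_n=\sum_{n\ge 0}\beta_n\delta_n .
\end{align*}
Substituting the explicit $\gamma_n$ and $\delta_n$ and pulling the $n$-independent factor $(t^2;q)_\infty/(t,tq,tz,tz^{-1};q)_\infty$ out of the sum on the $\alpha$-side yields \eqref{eq:Bailey-special} verbatim.

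The only point needing care is the interchange of summations hidden in the proof of Bailey's transform. Writing $\sum_n \beta_n\delta_n=\sum_n\sum_{l\le n}u_{n-l}v_{n+l}\alpha_l\delta_n$ and swapping the order to get $\sum_l \alpha_l\gamma_l$ requires absolute convergence of the double series. I will state the corollary under the paper's standing assumption of ``suitable convergence conditions,'' treating both sides as formal power series in $t$.

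This interpretation is justified as follows. The factor $\delta_n = t^n\cdot(\text{Laurent polynomial in } z)$ forces the $n$-th term of the left side to be $O(t^n)$, provided $\beta_n$ has no negative powers of $t$. The same holds for $\gamma_n$, which carries an explicit factor $t^n$. So each coefficient of $t^N$ receives only finitely many contributions, and the rearrangement is legitimate formally. Analytically, it suffices to take $|t|$ small and $|q|<1$, with $|z|=1$ and $\alpha_n$ of at most geometric growth; in that regime all series converge absolutely.

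I expect no real obstacle here: the substantive work is Theorem~\ref{th:conjugate-Bailey}. The only thing to check is that the convergence and formal-series bookkeeping is stated cleanly enough to cover the Bailey pairs used later, such as the Bailey chain \eqref{eq:Bailey-chain-k-a}--\eqref{eq:Bailey-chain-k-b}.
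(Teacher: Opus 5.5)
Your proposal is correct and is exactly the paper's argument: Bailey's transform \eqref{eq:Bailey-trans} with $u_n=1/(q;q)_n$, $v_n=1/(tq;q)_n$, applied to the conjugate pair of Theorem~\ref{th:conjugate-Bailey}, then pulling out the $n$-independent prefactor. Your convergence discussion goes further than the paper, which only appeals to ``suitable convergence conditions''; one small correction is that the formal-power-series argument needs $\alpha_n$, and not only $\beta_n$, to contain no negative powers of $t$, and this holds for the Bailey chain pairs used later.
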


\begin{proof}
	It follows from Bailey's transform \eqref{eq:Bailey-trans} with the conjugate Bailey pair in Theorem~\ref{th:conjugate-Bailey} on call.
\end{proof}

\section{Fermionic--bosonic duality}\label{sec:fermionic-bosonic}

For the moment, we are ready to close our proof of the fermionic--bosonic duality in Theorem~\ref{th:fermionic-bosonic-main}. Starting from the most elementary Bailey pair relative to $(t,q)$ \cite[p.~31, eqs.~(3.47) and (3.48)]{And1986}:
\begin{align*}
	\alpha_n^{(0)} = \frac{(-1)^n q^{\binom{n}{2}} (1-t q^{2n}) (t;q)_n}{(1-t) (q;q)_n}, \qquad\qquad\qquad
	\beta_n^{(0)} = \delta_{n,0},
\end{align*}
we produce a chain of Bailey pairs in light of \eqref{eq:Bailey-chain-k-a} and \eqref{eq:Bailey-chain-k-b}:
\begin{align*}
	\alpha_n^{(1)} &= \frac{(-1)^n t^n q^{n+\binom{n}{2}} (1-t q^{2n}) (t;q)_n}{(1-t) (q;q)_n}\cdot \frac{(b_1^{-1},c_1^{-1};q)_n  (b_1 c_1)^n}{(b_1tq,c_1tq;q)_n},\\
	\beta_n^{(1)} &= \frac{1}{(q;q)_n}\cdot \frac{(b_1c_1tq;q)_n}{(b_1tq,c_1tq;q)_n},
\end{align*}
and for $k\ge 2$,
\begin{align*}
	\alpha_n^{(k)} &= \frac{(-1)^n t^{kn} q^{kn+\binom{n}{2}} (1-t q^{2n}) (t;q)_n}{(1-t) (q;q)_n}\\
	&\quad \times \frac{(b_1^{-1},c_1^{-1},\ldots,b_k^{-1},c_k^{-1};q)_n (b_1c_1\cdots b_kc_k)^n}{(b_1tq,c_1tq,\ldots,b_ktq,c_ktq;q)_n},\\
	\beta_n^{(k)} &= \sum_{0 \le n_1\le \cdots\le n_{k-1}\le n} \frac{(tq)^{n_1+\cdots+n_{k-1}}}{(q;q)_{n-n_{k-1}}(q;q)_{n_{k-1}-n_{k-2}}\cdots (q;q)_{n_2-n_1} (q;q)_{n_1}}\\
	&\quad\times \frac{(b_kc_ktq;q)_{n-n_{k-1}}(b_{k-1}c_{k-1}tq;q)_{n_{k-1}-n_{k-2}}\cdots (b_2c_2tq;q)_{n_2-n_1} (b_1c_1tq;q)_{n_1}}{(b_1tq,c_1tq;q)_{n_1}\cdots (b_{k-1}tq,c_{k-1}tq;q)_{n_{k-1}}(b_{k}tq,c_{k}tq;q)_{n}}\notag\\
	&\quad\times (b_2^{-1},c_2^{-1};q)_{n_1}\cdots (b_k^{-1},c_k^{-1};q)_{n_{k-1}} (b_2 c_2)^{n_1} \cdots (b_kc_k)^{n_{k-1}}.
\end{align*}
Apply them to \eqref{eq:Bailey-special} and make the substitution $z\mapsto z^{2}$. We immediately arrive at the following strengthening of Theorem~\ref{th:fermionic-bosonic-main}, which reduces to \eqref{eq:fermionic-bosonic-main} at the limiting case where all $b$'s and $c$'s go to zero.

\begin{theorem}\label{th:fermionic-bosonic-main-generalization}
	For every $k\ge 1$,
	\begin{align}
		&\sum_{n_k\ge \cdots\ge n_1 \ge 0} \frac{t^{n_1+\cdots+n_{k}} q^{n_1+\cdots+n_{k-1}}}{(q;q)_{n_k-n_{k-1}}\cdots (q;q)_{n_2-n_1} (q;q)_{n_1}} \sum_{j=0}^{2n_k} \qbinom{2n_k}{j}_q z^{2j-2n_k}\notag\\
		&\times \frac{(b_kc_ktq;q)_{n_k-n_{k-1}}\cdots (b_2c_2tq;q)_{n_2-n_1} (b_1c_1tq;q)_{n_1}}{(b_1tq,c_1tq;q)_{n_1}\cdots (b_{k}tq,c_{k}tq;q)_{n_k}}\notag\\
		&\times (b_2^{-1},c_2^{-1};q)_{n_1}\cdots (b_k^{-1},c_k^{-1};q)_{n_{k-1}} (b_2 c_2)^{n_1} \cdots (b_kc_k)^{n_{k-1}}\notag\\
		&\qquad = \frac{1}{(t,tz^2,tz^{-2};q)_\infty}\sum_{n\ge 0} (-1)^n t^{(k+1)n} q^{kn+\binom{n}{2}} \frac{(q^{n+1};q)_n (t^2q^{2n};q)_\infty}{(tq^n;q)_n (tq^{2n+1};q)_\infty}\notag\\
		&\qquad\quad\times \sum_{j=0}^{2n} \frac{(t;q)_j (t;q)_{2n-j} z^{2j-2n}}{(q;q)_j (q;q)_{2n-j}} \cdot \frac{(b_1^{-1},c_1^{-1},\ldots,b_k^{-1},c_k^{-1};q)_n (b_1c_1\cdots b_kc_k)^n}{(b_1tq,c_1tq,\ldots,b_ktq,c_ktq;q)_n}.
	\end{align}
\end{theorem}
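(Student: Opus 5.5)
The plan is to plug the Bailey chain built from the elementary pair $(\alpha_n^{(0)},\beta_n^{(0)})$ directly into the corollary identity \eqref{eq:Bailey-special}, then substitute $z\mapsto z^2$, and match both sides term by term with the statement of Theorem~\ref{th:fermionic-bosonic-main-generalization}. For fixed $k\ge 1$ we take the pair $(\alpha_n^{(k)},\beta_n^{(k)})$ displayed above. That it is a Bailey pair relative to $(t,q)$ is exactly the content of \eqref{eq:Bailey-chain-k-a}--\eqref{eq:Bailey-chain-k-b} applied to the elementary pair. Because $\beta_{n_0}^{(0)}=\delta_{n_0,0}$, the innermost summation collapses to $n_0=0$. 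This is why the index $n_1$ inherits the factor $(b_1c_1tq;q)_{n_1}/(q;q)_{n_1}$, and the parameters $b_1,c_1$ enter the $\beta$-side only through the denominators.

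On the fermionic side, I will rename the outer index $n$ of $\beta^{(k)}_n$ as $n_k$ and multiply by $t^{n_k}\sum_j\qbinom{2n_k}{j}_q z^{2j-2n_k}$. The factor $(tq)^{n_1+\cdots+n_{k-1}}$ then combines with $t^{n_k}$ to give $t^{n_1+\cdots+n_k}q^{n_1+\cdots+n_{k-1}}$, which is the left-hand side as stated. On the bosonic side, the first step is to rewrite
\begin{align*}
\frac{(1-tq^{2n})(t;q)_n}{(1-t)(q;q)_n}\cdot\frac{(q;q)_{2n}}{(t^2;q)_{2n}}\cdot(t^2;q)_\infty
\end{align*}
together with the prefactor $1/(t,tq;q)_\infty$, using elementary $q$-Pochhammer manipulations, into
\begin{align*}
\frac{1}{(t;q)_\infty}\cdot\frac{(q^{n+1};q)_n (t^2q^{2n};q)_\infty}{(tq^n;q)_n(tq^{2n+1};q)_\infty}.
\end{align*}
To check this, I will use
\begin{align*}
(q;q)_{2n}/(q;q)_n=(q^{n+1};q)_n,\qquad (t^2;q)_\infty/(t^2;q)_{2n}=(t^2q^{2n};q)_\infty,
\end{align*}
and
\begin{align*}
\frac{(1-tq^{2n})(t;q)_n}{(1-t)(t;q)_\infty(tq;q)_\infty}=\frac{1}{(t;q)_\infty\,(tq^n;q)_n\,(tq^{2n+1};q)_\infty},
\end{align*}
which holds since $(tq;q)_\infty=(tq;q)_{n-1}(tq^n;q)_n(tq^{2n+1};q)_\infty/(1-tq^{2n})$ and $(t;q)_n/(1-t)=(tq;q)_{n-1}$. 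The $n=0$ term needs a separate check, which is trivial. The powers then combine as $t^{kn}\cdot t^n=t^{(k+1)n}$ and $q^{kn+\binom n2}$. The $z$-dependence $(tz,tz^{-1};q)_\infty$ becomes $(tz^2,tz^{-2};q)_\infty$ after the substitution, and the remaining $b,c$ factor is carried over verbatim.

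I expect no real obstacle: the content lies in Theorem~\ref{th:conjugate-Bailey} and the Bailey chain, and the only delicate point is the Pochhammer bookkeeping above. I will also justify the interchange and convergence, either for $|t|,|q|<1$ with generic $b_i,c_i$ or as formal power series in $t$. Finally, I will note that letting all $b_i,c_i\to 0$ turns $(b^{-1},c^{-1};q)_n(bc)^n$ into $q^{2\binom n2}=q^{n^2-n}$. This upgrades $q^{kn}$ to $q^{kn^2}$ and $q^{n_i}$ to $q^{n_i^2}$, recovering Theorem~\ref{th:fermionic-bosonic-main}.
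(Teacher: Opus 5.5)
Your proposal is correct and is exactly the paper's argument: insert the Bailey chain $(\alpha_n^{(k)},\beta_n^{(k)})$ built on the unit pair into \eqref{eq:Bailey-special}, substitute $z\mapsto z^2$, and tidy the Pochhammer factors. One small slip: the factorization should read $(tq;q)_\infty=(tq;q)_{n-1}(tq^n;q)_n(1-tq^{2n})(tq^{2n+1};q)_\infty$, with $1-tq^{2n}$ multiplying rather than dividing; this version is what makes your displayed identity hold, and it remains correct. Also, $b_1,c_1$ do appear in a numerator on the $\beta$-side, through $(b_1c_1tq;q)_{n_1}$.
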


\section{A second fermionic formula}\label{sec:KKS-2.26}

The goal of this section is to show the equivalence between \eqref{eq:ABBST-z} and \eqref{eq:KKS-2.26} as stated in Theorem~\ref{th:KKS-2.26-id}. To begin with, we single out summations over $r$'s on the right-hand side of \eqref{eq:KKS-2.26-id}. In particular, for each $i$ with $1\le i\le k$,
\begin{align*}
	\sum_{r_i\ge 0} \frac{q^{r_i(s_{i-1}+s_i+1)}}{(t,q;q)_{r_i}} &= \lim_{\tau\to 0}{}_{2} \phi_{1} \left(\begin{matrix}
		0,\tau\\
		t
	\end{matrix};q,q^{s_{i-1}+s_i+1}\right)\\
	&= \lim_{\tau\to 0} \frac{(\tau,0;q)_\infty}{(t,q^{s_{i-1}+s_i+1};q)_\infty} {}_{2} \phi_{1} \left(\begin{matrix}
		t/\tau,q^{s_{i-1}+s_i+1}\\
		0
	\end{matrix};q,\tau\right),
\end{align*}
where we have applied \emph{Heine's first transformation} \cite[p.~359, eq.~(III.1)]{GR2004}:
\begin{align*}
	{}_{2} \phi_{1} \left(\begin{matrix}
		a,b\\
		c
	\end{matrix};q,z\right) = \frac{(b,az;q)_\infty}{(c,z;q)_\infty} {}_{2} \phi_{1} \left(\begin{matrix}
		c/b,z\\
		az
	\end{matrix};q,b\right).
\end{align*}
Thus,
\begin{align*}
	(t,q;q)_\infty \sum_{r_i\ge 0} \frac{q^{r_i(s_{i-1}+s_i+1)}}{(t,q;q)_{r_i}} = \sum_{r_i\ge 0} \frac{(-1)^{r_i} t^{r_i} q^{\binom{r_i}{2}} (q;q)_{s_{i-1}+s_i+r_i}}{(q;q)_{r_i}}.
\end{align*}
Before substituting the above relations into the right-hand side of \eqref{eq:KKS-2.26-id}, we change the indices for each $i$ with $1\le i\le k$,
\begin{align*}
	n_i := r_i + s_i.
\end{align*}
It follows that
\begin{align}\label{eq:KKS-2.26-middle-step}
	\RHS\eqref{eq:KKS-2.26-id} = \sum_{n_1,\ldots,n_k\ge 0} t^{\sum_{i=1}^k n_i} (q;q)_{n_1} B_{n_k}(z;q) \prod_{i=1}^{k-1} \Phi_{n_i,n_{i+1}}(q),
\end{align}
where
\begin{align*}
	B_{n}(z;q) := \sum_{s = 0}^n \frac{(-1)^{n-s} q^{\binom{n-s}{2}}}{(q;q)_{s}^2 (q;q)_{n-s}}\sum_{u_1,u_2\ge 0} \qbinom{s}{u_1}_q \qbinom{s}{u_2}_q z^{2u_1-2u_2},
\end{align*}
and
\begin{align*}
	\Phi_{n,n'}(q) := \sum_{s = 0}^n \frac{(-1)^{n-s} q^{\binom{n-s}{2}}(q;q)_{s+n'}}{(q;q)_{s}^2(q;q)_{n-s}}.
\end{align*}

For $B_{n}(z;q)$, we write it in terms of the continuous $q$-Hermite polynomial \eqref{eq:q-Hermite-def}:
\begin{align*}
	B_n(z;q) = \sum_{s = 0}^n \frac{(-1)^{n-s} q^{\binom{n-s}{2}}}{(q;q)_{s}^2 (q;q)_{n-s}} H_{s}(z;q)^2,
\end{align*}
where we have noted the symmetry $H_n(z;q) = H_n(z^{-1};q)$. Now we require the linearization of products of $H_n(z;q)$ \cite[p.~322, Theorem~13.1.5]{Ism2009}:
\begin{align*}
	H_m(z;q) H_n(z;q) = \sum_{l = 0}^{\min(m,n)} \frac{(q;q)_m (q;q)_n}{(q;q)_l (q;q)_{m-l} (q;q)_{n-l}} H_{m+n-2l}(z;q).
\end{align*}
It follows that
\begin{align*}
	B_n(z;q) = \sum_{s = 0}^n \frac{(-1)^{n-s} q^{\binom{n-s}{2}}}{(q;q)_{s}^2 (q;q)_{n-s}} \sum_{l = 0}^s \frac{(q;q)_s^2}{(q;q)_l (q;q)_{s-l}^2} H_{2s-2l}(z;q).
\end{align*}
Substituting $l\mapsto s-j$ and interchanging the summations, we have
\begin{align*}
	B_n(z;q) = \sum_{j = 0}^n \frac{H_{2j}(z;q)}{(q;q)_j^2} \sum_{s = j}^n \frac{(-1)^{n-s} q^{\binom{n-s}{2}}}{(q;q)_{n-s} (q;q)_{s-j}}.
\end{align*}
For the inner sum, we further make the change of indices $s\mapsto s+j$ and write it as a ${}_{1} \phi_{0}$ series so that
\begin{align*}
	B_n(z;q) = \sum_{j = 0}^n \frac{(-1)^{n-j} q^{\binom{n}{2}+\binom{j}{2}+j(1-n)} H_{2j}(z;q)}{(q;q)_j^2} {}_{1} \phi_{0} \left(\begin{matrix}
		q^{-(n-j)}\\
		-
	\end{matrix};q,q\right),
\end{align*}
while this ${}_{1} \phi_{0}$ series becomes the Kronecker delta $\delta_{j,n}$ according to the \emph{$q$-binomial theorem} \cite[p.~354, eq.~(II.4)]{GR2004}:
\begin{align*}
	{}_{1} \phi_{0} \left(\begin{matrix}
		q^{-n}\\
		-
	\end{matrix};q,z\right) = (zq^{-n};q)_n.
\end{align*}
Therefore, by further using the symmetry $H_n(z;q) = H_n(z^{-1};q)$,
\begin{align}\label{eq:B-eva}
	B_n(z;q) = \frac{H_{2n}(z;q)}{(q;q)_n^2} = \frac{1}{(q;q)_n^2} \sum_{j=0}^{2n} \qbinom{2n}{j}_q z^{2j-2n}.
\end{align}

For $\Phi_{n,n'}(q)$, we have
\begin{align*}
	\Phi_{n,n'}(q) &= \frac{(-1)^n q^{\binom{n}{2}} (q;q)_{n'}}{(q;q)_n} {}_{2} \phi_{1} \left(\begin{matrix}
		q^{n'+1},q^{-n}\\
		q
	\end{matrix};q,q\right)\\
	&= \frac{(-1)^n q^{\binom{n}{2}} (q;q)_{n'}}{(q;q)_n} \cdot \frac{q^{n(n'+1)} (q^{-n'};q)_n}{(q;q)_n},
\end{align*}
where for the evaluation of the ${}_{2} \phi_{1}$ series, we have used the \emph{second $q$-Chu--Vandermonde sum} \cite[p.~354, eq.~(II.6)]{GR2004}:
\begin{align*}
	{}_{2} \phi_{1} \left(\begin{matrix}
		a,q^{-n}\\
		c
	\end{matrix};q,q\right) = \frac{a^n (c/a;q)_n}{(c;q)_n}.
\end{align*}
Hence,
\begin{align}\label{eq:Phi-eva}
	\Phi_{n,n'}(q) = \frac{q^{n^2}(q;q)_{n'}}{(q;q)_{n}} \qbinom{n'}{n}_q.
\end{align}

Finally, applying \eqref{eq:B-eva} and \eqref{eq:Phi-eva} to \eqref{eq:KKS-2.26-middle-step}, we find that
\begin{align*}
	\RHS\eqref{eq:KKS-2.26-id} = \sum_{n_k\ge \cdots\ge n_1 \ge 0} \frac{t^{n_1+\cdots+n_{k}} q^{n_1^2+\cdots + n_{k-1}^2}}{(q;q)_{n_k-n_{k-1}}\cdots (q;q)_{n_2-n_1} (q;q)_{n_1}}\sum_{j=0}^{2n_k} \qbinom{2n_k}{j}_q z^{2j - 2n_k},
\end{align*}
as claimed.

\section{Closing remarks}\label{sec:closing}

In \cite{And2001}, Andrews extended the scheme of Bailey pairs to a well-poised version. To be specific, sequences $\alpha'_n$ and $\beta'_n$ form a \emph{WP-Bailey pair} relative to $(s,t,q)$ \cite[p.~15, eq.~(6.2)]{And2001} if for each $n\ge 0$,
\begin{align}\label{eq:WP-Bailey-beta}
	\beta'_n = \sum_{l=0}^n \frac{(st^{-1};q)_{n-l} (s; q)_{n+l}}{(q;q)_{n-l} (tq; q)_{n+l}}\,\alpha'_l;
\end{align}
likewise, sequences $\gamma'_n$ and $\delta'_n$ form a \emph{conjugate WP-Bailey pair} relative to $(s,t,q)$ if for each $n\ge 0$,
\begin{align}\label{eq:WP-Bailey-conj-gamma}
	\gamma'_n = \sum_{l\ge n} \frac{(st^{-1};q)_{l-n} (s;q)_{l+n}}{(q;q)_{l-n} (tq; q)_{l+n}}\,\delta'_l.
\end{align}
It is notable that such a well-poised extension reduces to the original form by taking $s=0$.

Toward this direction, we may lift the conjugate Bailey pair in Theorem~\ref{th:conjugate-Bailey} to the following WP-extension. This conjugate WP-Bailey pair further leads us to a generalization of the relation in Theorem~\ref{th:fermionic-bosonic-main-generalization} by applying the WP-Bailey pairs acquired from iterations of \cite[pp.~15--16, Theorem~7]{And2001} starting with the seed case \cite[p.~17, eqs.~(6.6) and (6.7)]{And2001}; we will not record this generalized identity due to its oversized expression. It remains unknown to understand the physical meaning conveyed by such a generalization, especially the new parameter $s$.

\begin{theorem}\label{th:conjugate-WP-Bailey}
	The sequences
	\begin{align}
		\gamma'_n = \frac{t^n (q;q)_{2n} (t^2, sz, sz^{-1};q)_\infty}{(t^2;q)_{2n} (t,tq,tz,tz^{-1};q)_\infty} \sum_{j=0}^{2n} \frac{(t;q)_j (t;q)_{2n-j}}{(q;q)_j (q;q)_{2n-j}}z^{j-n}
	\end{align}
	and
	\begin{align}
		\delta'_n = \frac{t^n (1-s q^{2n}) (q;q)_{2n} (s^2;q)_\infty}{(1-s) (s^2;q)_{2n} (s,sq;q)_\infty} \sum_{j=0}^{2n} \frac{(s;q)_j (s;q)_{2n-j}}{(q;q)_j (q;q)_{2n-j}}z^{j-n}
	\end{align}
	form a conjugate WP-Bailey pair relative to $(s,t,q)$.
\end{theorem}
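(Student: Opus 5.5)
We follow the proof of Theorem~\ref{th:conjugate-Bailey}, replacing the $q$-Hermite basis by the continuous $q$-ultraspherical basis $C_{2l}(z,s;q)$ (with $z\mapsto z^2$ as before, i.e.\ $z=\ee^{\ii\theta}$, and $z\mapsto z^{-1/2}$ at the end). Note that $\delta'_l$ is a multiple of $C_{2l}(z^{1/2},s;q)$ and $\gamma'_n$ a multiple of $C_{2n}$ with parameter $t$. The claim is therefore equivalent to the connection-type expansion
\begin{align*}
	\frac{(sz^2,sz^{-2};q)_\infty}{(tz^2,tz^{-2};q)_\infty}\, C_{2n}(z,t;q) = \sum_{l\ge n} d_{n,l}\, C_{2l}(z,s;q),
\end{align*}
where
\begin{align*}
	d_{n,l} = \frac{(t,tq;q)_\infty (t^2;q)_{2n} (s^2;q)_\infty}{(t^2;q)_\infty (q;q)_{2n}(s,sq;q)_\infty}\cdot \frac{t^{l-n}(1-sq^{2l})(q;q)_{2l}(st^{-1};q)_{l-n}(s;q)_{l+n}}{(1-s)(s^2;q)_{2l}(q;q)_{l-n}(tq;q)_{l+n}}
\end{align*}
is obtained by matching with \eqref{eq:WP-Bailey-conj-gamma}. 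At $s=0$ this reduces to the expansion \eqref{eq:C-expansion}, which serves as a consistency check.

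\textbf{Step 1: integral formula for $d_{n,l}$.} We use the orthogonality of $C_m(z,s;q)$ with respect to the weight $(z^2,z^{-2};q)_\infty/(sz^2,sz^{-2};q)_\infty$ \cite[Theorem~13.2.1]{Ism2009}. The factor $(sz^{\pm2};q)_\infty$ then cancels, and
\begin{align*}
	d_{n,l}\cdot h_{2l}(s) = \int_0^\pi C_{2n}(z,t;q)\,C_{2l}(z,s;q)\,\frac{(z^2,z^{-2};q)_\infty}{(tz^2,tz^{-2};q)_\infty}\frac{\dd\theta}{\pi},
\end{align*}
where $h_{2l}(s)$ is the known norm. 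Expanding $C_{2l}(z,s;q)$ via \eqref{eq:q-ultraspherical-def} reduces the right-hand side to the integrals $I_{m,n}$ of \eqref{eq:Imn-def}. Their evaluation, $I_{m,n}=\frac{(t,tq;q)_\infty}{(q,t^2;q)_\infty}(S_{m,n}-S_{m+1,n})$ with $S_{d,n}$ as in \eqref{eq:S-expression}, is independent of $s$ and can be reused verbatim.

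\textbf{Step 2: symmetrization.} The reflection $j\mapsto 2l-j$ used in \eqref{eq:S-symmetry} carries over, because the coefficients $(s;q)_j(s;q)_{2l-j}/((q;q)_j(q;q)_{2l-j})$ are symmetric under $j\mapsto 2l-j$, exactly as the $q$-binomials were. The key relation $S_{j-l-n,n}=-\,(\text{reflected})\,S_{j'-l-n+1,n}$ only concerns the $S$'s, so it still holds. This leaves a single finite sum
\begin{align*}
	\sum_{j=0}^{2l}\frac{(s;q)_j(s;q)_{2l-j}}{(q;q)_j(q;q)_{2l-j}}\cdot\frac{(q^{j-l-n};q)_{2n}(t^{-1};q)_{j-l-n}t^{j-l-n}}{(t;q)_{j-l+n}}.
\end{align*}
As before, the factor $(q^{j-l-n};q)_{2n}$ kills terms, which forces $d_{n,l}=0$ for $l<n$.

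\textbf{Step 3: main obstacle.} We must evaluate the finite sum above in closed form; this is the $s$-deformation of the identity \eqref{eq:key-id-1} from Appendix~\ref{appx:evaluation}. After shifting $j\mapsto j+l+n$ it becomes a terminating balanced, very-well-poised-type series. We would first try to write it as a terminating ${}_4\phi_3$ or ${}_6\phi_5$ and apply Jackson's ${}_6\phi_5$/${}_8\phi_7$ summation or Watson's transformation to a summable ${}_4\phi_3$. The expected output $(st^{-1};q)_{l-n}(s;q)_{l+n}(1-sq^{2l})/((q;q)_{l-n}(tq;q)_{l+n})$ has exactly the very-well-poised shape that suggests this route.

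If the direct summation resists, the fallback is to verify the identity in $s$ via a recurrence in $l$ (the three-term recurrence of $C_m(z,s;q)$), or to prove it for $s=q^{N}$-type specializations and extend by rationality. Once the sum is evaluated, the norm $h_{2l}(s)$ combines with the prefactors to give $d_{n,l}$. Finally, the change of variables $z\mapsto z^{-1/2}$ and division by the normalizing constants yields $\gamma'_n=\sum_{l\ge n}\frac{(st^{-1};q)_{l-n}(s;q)_{l+n}}{(q;q)_{l-n}(tq;q)_{l+n}}\delta'_l$.
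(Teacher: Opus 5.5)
Your Steps~1 and~2 coincide with the paper's argument: orthogonality of $C_m(z,s;q)$, reuse of $I_{m,n}$ and $S_{d,n}$, and the reflection $j\mapsto 2l-j$, which only needs the $s$-weights to be symmetric. Your $d_{n,l}$ is the paper's $c'_{n,l}$.

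The gap is Step~3. It is the only non-routine part, you leave it as a list of things to try, and the route you sketch does not work as stated. Because $(q^{j-l-n};q)_{2n}$ vanishes for $l-n<j\le l+n$, the finite sum lives on two disjoint blocks, $0\le j\le l-n$ and $l+n+1\le j\le 2l$. Shifting $j\mapsto j+l+n$ does not merge them. If you write the summand as a ratio of $q$-shifted factorials in $j$, the numerator parameter $q^{n-l}$ terminates the series after the first block, and the second block appears only as $0/0$ terms. So Jackson's or Watson's formulas cannot be applied directly, even though your instinct that a terminating ${}_6\phi_5$ is involved is right. Also, the factor $(1-sq^{2l})$ that you read as a very-well-poised signature comes from the norm $h_{2l}(s)$. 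In your normalization the sum itself must equal $\frac{(st^{-1};q)_{l-n}(s;q)_{l+n}t^{l-n}}{(q;q)_{l-n}(tq;q)_{l+n}}$, and its very-well-poised structure sits in a different parameter.

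The missing idea is the folding used in Lemma~\ref{le:key-id-1}. Pair $j$ with $2l+1-j$ for $0\le j\le l-n$, declaring the term at $j=2l+1$ to be zero. The $t$-part of your summand is $\frac{(q;q)_{2n}}{(t^2;q)_{2n}}S_{j-l-n,n}$, and $S_{d,n}=-S_{1-2n-d,n}$ (the identity behind \eqref{eq:S-symmetry}). So the $t$-part flips sign under $j\mapsto 2l+1-j$, while the $s$-weights are multiplied by $\frac{(1-q^j)(1-sq^{2l-j})}{(1-q^{2l+1-j})(1-sq^{j-1})}$. Writing $\sigma_j$ for the summand, this gives
\begin{align*}
	\sigma_j+\sigma_{2l+1-j}=\frac{q^j(1-s/q)(1-q^{2l+1-2j})}{(1-sq^{j-1})(1-q^{2l+1-j})}\,\sigma_j .
\end{align*}
After $j\mapsto l-n-j$ the factor $1-q^{2n+1+2j}$ appears. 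The sum then becomes a terminating ${}_6\phi_5$ with $a=q^{2n+1}$, $b=t^{-1}q$, $c=sq^{l+n}$, $N=l-n$ and argument $tq/s=aq^{N+1}/(bc)$, which \eqref{eq:6phi5} evaluates. Your fallback via specializations of $s$ would still require evaluating the sum at sufficiently many values. Without this step or some other actual evaluation, the proposal only reduces the theorem to an unproved identity.
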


\begin{proof}
	See Appendix~\ref{sec:conjugate-WP-Bailey}.
\end{proof}

\subsection*{Acknowledgements}

Shane Chern was supported by the Austrian Science Fund (no.~10.55776/F1002). We are grateful to Ranveer Kumar Singh for informing us of their work \cite{ABST2025}, and to George Andrews, Matthew Buican, and Ole Warnaar for useful feedback. We also acknowledge conversations with GPT-5.5 Pro that suggested the bosonic formula for the Macdonald index.

\bibliographystyle{amsplain}

\appendix

\section{Original formulation of the fermionic sum in \eqref{eq:KKS-2.26}}\label{appx:fer-2-original}

Here we examine the equivalence between the fermionic sum in \eqref{eq:KKS-2.26} and that in Andrews et al.~\cite{ABST2025} or Kim et al.~\cite{KKS2026}. Note that \cite[p.~8, eq.~(2.26)]{KKS2026}, with $z$ rescaled to $z^2$ and $T$ substituted by $tq^{-1}$, reads:
\begin{align}\label{eq:fer-2-original}
	&(t,q;q)_\infty^{k} \sum_{\substack{l_1,\ldots,l_{2k+1}\ge 0\\m_1,\ldots,m_{2k+1}\ge 0}} \delta_{l_{2k}+l_{2k+1},m_{2k}+m_{2k+1}} \prod_{i=1}^{2k-1} \delta_{l_i,m_i}\notag\\
	&\times \frac{q^{\sum_{i,j=1}^{2k+1} \frac{a_{i,j} l_i m_j}{2} + \sum_{i=1}^{k} \frac{l_{2i-1}+m_{2i-1}}{2}} t^{\sum_{i=1}^{k}\frac{l_{2i}+m_{2i}}{2}+\frac{l_{2k+1}+m_{2k+1}}{2}} z^{2m_{2k+1}-2l_{2k+1}}}{(q;q)_{l_{2k+1}}(q;q)_{m_{2k+1}} \prod_{i=1}^k (q;q)_{l_{2i}}(q;q)_{m_{2i}}(t;q)_{l_{2i-1}}(q;q)_{m_{2i-1}}},
\end{align}
where $A := (a_{i,j})_{1\le i,j\le 2k+1}$ is given by $A = 2I_{2k+1} - M_{\mathsf{Car}}(D_{2k+1})$ with $I_{2k+1}$ the identity matrix of dimension $2k+1$ and $M_{\mathsf{Car}}(D_{2k+1})$ the Cartan matrix of the $D_{2k+1}$ Lie algebra. In particular, in this case $A$ is the adjacency matrix for the $D_{2k+1}$ Dynkin diagram: 
\begin{align*}
	\vcenter{\hbox{\begin{tikzpicture}[scale=1.5]
				\draw[line width = 1pt] (0,0) -- (1,0);
				\draw[dotted, line width = 1pt] (1,0) -- (2,0);
				\draw[line width = 1pt] (2,0) -- (3,0);
				\draw[line width = 1pt] (3,0) -- (3.5,0.86);
				\draw[line width = 1pt] (3,0) -- (3.5,-0.86);
				\foreach\i in {0, 1, 2, 3} {
					\draw[black,fill=white] (\i,0) circle (.05);
				}
				\draw[black,fill=white] (3.5,0.86) circle (.05);
				\draw[black,fill=white] (3.5,-0.86) circle (.05);
				\node[below left, scale=0.75] at (0,0) {$1$};
				\node[below left, scale=0.75] at (1,0) {$2$};
				\node[below left, scale=0.75] at (2,0) {$2k-2$};
				\node[below left, scale=0.75] at (3,0) {$2k-1$};
				\node[above right, scale=0.75] at (3.5,0.86) {$2k$};
				\node[below right, scale=0.75] at (3.5,-0.86) {$2k+1$};
	\end{tikzpicture}}}\quad
	,
\end{align*}
so that $a_{i,i+1} = a_{i+1,i} = 1$ for $i$ with $1\le i\le 2k-2$, $a_{2k-1,2k} = a_{2k,2k-1} = a_{2k-1,2k+1} = a_{2k+1,2k-1} = 1$, and $a_{i,j} = 0$ for all other cases.

In light of the Kronecker delta factors in \eqref{eq:fer-2-original}, we rename the summation indices as in \eqref{eq:KKS-2.26}:
\begin{align*}
	r_i &:= l_{2i-1} = m_{2i-1}, &&\quad (1\le i\le k),\\
	s_i &:= l_{2i} = m_{2i}, &&\quad (1\le i\le k-1),
\end{align*}
and
\begin{align*}
	s_k := l_{2k}+l_{2k+1} = m_{2k}+m_{2k+1}.
\end{align*}
Thus, the exponent of $q$ in the numerator becomes
\begin{align*}
	\sum_{i,j=1}^{2k+1} \frac{a_{i,j} l_i m_j}{2} + \sum_{i=1}^{k} \frac{l_{2i-1}+m_{2i-1}}{2} = \sum_{i=1}^k r_i (s_{i-1} + s_i + 1),
\end{align*}
while the exponent of $t$ is
\begin{align*}
	\sum_{i=1}^{k}\frac{l_{2i}+m_{2i}}{2}+\frac{l_{2k+1}+m_{2k+1}}{2}  = \sum_{i=1}^k s_i.
\end{align*}
We further put
\begin{align*}
	u_1 := l_{2k}, \qquad u_2 := m_{2k},
\end{align*}
so that
\begin{align*}
	l_{2k+1} = s_k - u_1, \qquad m_{2k+1} = s_k - u_2.
\end{align*}
In this way, the exponent of $z$ becomes
\begin{align*}
	2m_{2k+1}-2l_{2k+1} = 2u_1 - 2u_2.
\end{align*}
Finally, for the denominator, we have
\begin{align*}
	\frac{1}{(t,q;q)_{r_1}\cdots (t,q;q)_{r_k} (q;q)_{s_1}^2 \cdots (q;q)_{s_{k-1}}^2}\cdot \frac{1}{(q;q)_{l_{2k}}(q;q)_{m_{2k}}(q;q)_{l_{2k+1}}(q;q)_{m_{2k+1}}},
\end{align*}
while the latter factor can be further rewritten as
\begin{align*}
	\frac{1}{(q;q)_{l_{2k}+l_{2k+1}} (q;q)_{m_{2k}+m_{2k+1}}} \qbinom{l_{2k}+l_{2k+1}}{l_{2k}}_q \qbinom{m_{2k}+m_{2k+1}}{m_{2k}}_q = \frac{1}{(q;q)_{s_k}^2} \qbinom{s_k}{u_1}_q \qbinom{s_k}{u_2}_q.
\end{align*}
Now \eqref{eq:fer-2-original} becomes exactly the same as the fermionic sum in \eqref{eq:KKS-2.26}.

\section{A basic hypergeometric identity}\label{appx:evaluation}

Here we evaluate the following basic hypergeometric series, which appears as a critical step in our proof of Theorem~\ref{th:conjugate-Bailey}.

\begin{lemma}\label{le:key-id-1}
	For $l,n\ge 0$,
	\begin{align}\label{eq:key-id-1}
		\sum_{j=0}^{2l} \frac{(q^{j-l-n};q)_{2n} (t^{-1};q)_{j-l-n} t^{j}}{(q;q)_j (q;q)_{2l-j} (t;q)_{j-l+n}} = \frac{t^{2l}}{(q;q)_{l-n} (tq;q)_{l+n}}.
	\end{align}
\end{lemma}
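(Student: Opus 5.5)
The plan is to split the sum into the ranges where the summand can be nonzero, convert the negative-index $q$-Pochhammer symbols, and reduce the result to a terminating balanced series that a classical summation evaluates.

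\textbf{Locating the nonzero terms.} Put $m:=j-l-n$. The factor $(q^{m};q)_{2n}=\prod_{i=0}^{2n-1}(1-q^{m+i})$ vanishes exactly when $m\le 0\le m+2n-1$. In terms of $j$, this means $l-n+1\le j\le l+n$. So only the ranges $0\le j\le l-n$ and $l+n+1\le j\le 2l$ can contribute.

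\textbf{The case $l<n$.} Both ranges are empty, so the left-hand side of \eqref{eq:key-id-1} is $0$. This matches the right-hand side, because $1/(q;q)_{l-n}=0$ for $l-n<0$ under the usual convention $1/(q;q)_{-r}=0$ for $r\ge 1$. This case is the source of the vanishing $c_{n,l}=0$ for $l\le n-1$ in the proof of Theorem~\ref{th:conjugate-Bailey}.

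\textbf{The case $l\ge n$: rewriting the two ranges.} In the upper range write $j=l+n+1+i$ with $0\le i\le l-n-1$. Then $m=i+1\ge 1$, and all Pochhammer symbols are ordinary:
\[
(q^{i+1};q)_{2n}=\frac{(q;q)_{i+2n}}{(q;q)_i},\qquad (t^{-1};q)_{i+1},\qquad (t;q)_{2n+i+1}.
\]
In the lower range write $j=i$ with $0\le i\le l-n$. Then $m=i-l-n<0$, and I would use $(a;q)_{-r}=1/(aq^{-r};q)_r=(-q/a)^r q^{\binom r2}/(q/a;q)_r$ together with the reflection $(q^{-N};q)_{k}=(-1)^kq^{\binom k2-Nk}(q;q)_N/(q;q)_{N-k}$. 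These turn both $(t^{-1};q)_{i-l-n}$ and $(q^{i-l-n};q)_{2n}$ into finite products in $i$.

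\textbf{Combining via symmetry.} The pairing $j\leftrightarrow 2l-j$ is the same symmetry exploited in \eqref{eq:S-symmetry}. Under it, the upper-range term indexed by $i$ corresponds to the lower-range term indexed by $l-n-1-i$. I expect the two contributions either to merge into a single sum over $0\le i\le l-n$ or to differ by a simple factor. After pulling out the $i=0$ normalisation, the merged sum should be a terminating series in $q^{-(l-n)}$ with numerator parameters built from $t$, $t^{-1}$ and $q^{2n}$. The target value $t^{2l}/((q;q)_{l-n}(tq;q)_{l+n})$ is a pure product, which suggests a balanced ${}_3\phi_2$. I would first try the $q$-Pfaff--Saalsch\"utz sum already quoted in the proof of Theorem~\ref{th:conjugate-Bailey}. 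Failing that, I would try the $q$-Chu--Vandermonde sum, or a very-well-poised ${}_6\phi_5$ summation if a factor $(1-tq^{2i+\cdots})$ survives from combining the two ranges.

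\textbf{Main obstacle and fallback.} The main difficulty is the bookkeeping in the lower range: tracking the powers of $q$ and the signs produced by the negative-index Pochhammers, and getting them to combine exactly with the upper range into a balanced series. If a direct reduction resists, the fallback is induction on $l$. One would check the small case $l=n$ directly (a single term $j=0$ plus the symmetric term $j=2l$), then derive a contiguous recurrence in $l$ from $\qbinom{2l+2}{j}_q=\qbinom{2l+1}{j-1}_q+q^{j}\qbinom{2l+1}{j}_q$ applied twice, and show the right-hand side satisfies the same recurrence.
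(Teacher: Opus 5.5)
\textbf{There is a genuine gap: the decisive step is never carried out.} Your preliminary steps are correct and match the paper: the support of the summand, the trivial case $l<n$, and the idea of merging the two surviving ranges by a reflection. But you never compute the paired summand or identify a summation formula, and your predictions for that step point the wrong way.

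\textbf{The reflection.} The reflection that \eqref{eq:S-symmetry} encodes is not $j\leftrightarrow 2l-j$. From \eqref{eq:S-expression} one checks $S_{d,n}=-S_{1-2n-d,n}$, and with $d=j-l-n$ this is $j\leftrightarrow 2l+1-j$. Since $s_j=\frac{(q;q)_{2n}t^{l+n}}{(t^2;q)_{2n}}\cdot\frac{S_{j-l-n,n}}{(q;q)_j(q;q)_{2l-j}}$, this gives at once, without the negative-index bookkeeping you flag as the main obstacle,
\[
s_{2l+1-j}=-\frac{1-q^{j}}{1-q^{2l+1-j}}\,s_j,\qquad s_j+s_{2l+1-j}=\frac{q^j(1-q^{1+2l-2j})}{1-q^{1+2l-j}}\,s_j,
\]
for $0\le j\le l-n$, with $s_{2l+1}:=0$. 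This is the paper's key step.

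\textbf{The summation.} After $j\mapsto l-n-j$, the merged sum is very-well-poised with $a=q^{2n+1}$, $b=t^{-1}q$, $q^{-(l-n)}$, and a fifth parameter $c=\tau q^{2n+2}$ with $\tau\to0$. It is evaluated by the terminating ${}_6\phi_5$ sum \eqref{eq:6phi5}. The surviving factor $1-q^{2n+1+2j}$ contains no $t$, and the series is very-well-poised rather than balanced. So neither $q$-Pfaff--Saalsch\"utz nor $q$-Chu--Vandermonde is the right tool. Moreover, the condition under which you would try the ${}_6\phi_5$ sum (a surviving factor $1-tq^{2i+\cdots}$) never arises.

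\textbf{Your own pairing.} The pairing $j\leftrightarrow 2l-j$ can in fact be pushed through, but only after a computation you have not done. For $0\le j\le l-n$ and $n\ge 1$ one finds
\[
s_j+s_{2l-j}=\frac{(t-1)(1-q^{2l-2j})}{(1-q^{l+n-j})(t-q^{l-n-j})}\,s_j .
\]
After $j\mapsto l-n-j$ this is very-well-poised with $a=q^{2n}$, $b=t^{-1}$, and it is again summed by the $c\to 0$ case of \eqref{eq:6phi5}. For $n=0$ the fixed point $j=l$ must be treated separately. Either way, the missing ingredient is the same: compute the paired summand explicitly, recognize the very-well-poised structure, and apply \eqref{eq:6phi5} in a limiting form.

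\textbf{The fallback.} The induction on $l$ does not close as stated. With $k=j-l$, the summand is $\frac{t^{l}}{(q;q)_{2l}}\qbinom{2l}{l+k}_q G(k)\,t^{k}$, where $G(k)=\frac{(q^{k-n};q)_{2n}(t^{-1};q)_{k-n}}{(t;q)_{k+n}}$. Applying the $q$-Pascal rule twice shifts the binomial's lower index and produces sums involving $G(k\pm 1)$ rather than $G(k)$. A recurrence in $l$ for the same sum would therefore need a creative-telescoping certificate, which you have not supplied.
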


\begin{proof}
	If $l< n$, we have the vanishing of $(q^{j-l-n};q)_{2n}$ for all $j$ ranging from $0$ to $2l$, and hence both sides of \eqref{eq:key-id-1} trivially go to zero in this case. Now we assume $l\ge n$. For convenience, let us write
	\begin{align*}
		s_j := \frac{(q^{j-l-n};q)_{2n} (t^{-1};q)_{j-l-n} t^{j}}{(q;q)_j (q;q)_{2l-j} (t;q)_{j-l+n}}, \qquad (0\le j\le 2l).
	\end{align*}
	It is clear that $s_j$ is supported on the ranges $0\le j\le l-n$ and $l+n+1\le j\le 2l$. Putting in addition
	\begin{align*}
		s_{2l+1} := 0,
	\end{align*}
	we observe that for each $j$ with $0\le j\le l-n$,
	\begin{align*}
		s_j + s_{1+2l-j} = \frac{q^j (1-q^{1+2l-2j})}{1-q^{1+2l-j}}\cdot s_j.
	\end{align*}
	Thus,
	\begin{align*}
		\LHS\eqref{eq:key-id-1} = \sum_{j=0}^{l-n} \frac{(q^{j-l-n};q)_{2n} (t^{-1};q)_{j-l-n} t^{j} q^j (1-q^{1+2l-2j})}{(q;q)_j (q;q)_{2l-j} (t;q)_{j-l+n} (1-q^{1+2l-j})}.
	\end{align*}
	Note that
	\begin{align*}
		(q^{j-l-n};q)_{2n} = \frac{(q;q)_{l+n-j}q^{n(2j-2l-1)}}{(q;q)_{l-n-j}}.
	\end{align*}
	We further make the change of indices $j\mapsto l-n-j$ so that
	\begin{align*}
		\LHS\eqref{eq:key-id-1} &= \sum_{j=0}^{l-n} \frac{(q;q)_{2n+j}(t^{-1};q)_{-2n-j} t^{l-n-j} q^{l-2n(n+1)-(2n+1)j} (1-q^{1+2n+2j})}{(q;q)_j (q;q)_{l-n-j} (q;q)_{l+n+j} (t;q)_{-j} (1-q^{1+l+n+j})}\\
		& = \sum_{j=0}^{l-n} \frac{(q;q)_{2n+j}(t^{-1};q)_{-2n-j} t^{l-n-j} q^{l-2n(n+1)-(2n+1)j} (1-q^{1+2n+2j})}{(q;q)_j (q;q)_{l-n-j} (q;q)_{1+l+n+j} (t;q)_{-j}}.
	\end{align*}
	Using the relations
	\begin{align*}
		(a;q)_{m+j} &= (a;q)_m (aq^m;q)_j,\\
		(a;q)_{m-j} &= \frac{(a;q)_m}{(q^{1-m}/a;q)_j} \left(-\frac{q}{a}\right)^j q^{\binom{j}{2}-nj},
	\end{align*}
	we have
	\begin{align*}
		\LHS\eqref{eq:key-id-1} &= \frac{(q;q)_{2n} t^{l+n} q^{l-n}}{(q;q)_{l-n}(q;q)_{l+n+1}(tq;q)_{2n}}\\
		&\quad\times \sum_{j=0}^{l-n} \frac{(q^{2n+1},t^{-1}q,q^{-(l-n)};q)_jt^j q^{(l-n-1)j}(1-q^{1+2n+2j})}{(q,q^{l+n+2},tq^{2n+1};q)_j (-1)^j q^{\binom{j}{2}}}\\
		&= \frac{(q;q)_{2n} t^{l+n} q^{l-n} (1-q^{2n+1})}{(q;q)_{l-n}(q;q)_{l+n+1}(tq;q)_{2n}}\\
		&\quad\times \lim_{\tau\to 0} {}_6 \phi_{5}\left(\begin{matrix}
			q^{2n+1},q^{n+\frac{3}{2}},-q^{n+\frac{3}{2}},t^{-1}q,\tau q^{2n+2},q^{-(l-n)}\\
			q^{n+\frac{1}{2}},-q^{n+\frac{1}{2}},tq^{2n+1},\tau^{-1},q^{l+n+2}
		\end{matrix};q,\frac{tq^{l-n-1}}{\tau}\right).
	\end{align*}
	For this terminating ${}_6 \phi_{5}$ series, we require the following evaluation \cite[p.~356, eq.~(II.21)]{GR2004}:
	\begin{align}\label{eq:6phi5}
		{}_6 \phi_{5}\left(\begin{matrix}
			a, a^{\frac{1}{2}}q, -a^{\frac{1}{2}}q, b, c, q^{-n}\\
			a^{\frac{1}{2}}, -a^{\frac{1}{2}}, aq/b, aq/c, aq^{n+1}
		\end{matrix};q,\frac{aq^{n+1}}{bc}\right) = \frac{(aq,aq/(bc);q)_n}{(aq/b,aq/c;q)_n}.
	\end{align}
	Therefore,
	\begin{align*}
		\LHS\eqref{eq:key-id-1} &= \frac{(q;q)_{2n} t^{l+n} q^{l-n} (1-q^{2n+1})}{(q;q)_{l-n}(q;q)_{l+n+1}(tq;q)_{2n}}\lim_{\tau\to 0} \frac{(q^{2n+2},tq^{-1}\tau^{-1};q)_{l-n}}{(tq^{2n+1},\tau^{-1};q)_{l-n}}\\
		&= \frac{t^{l+n}q^{l-n}}{(q;q)_{l-n} (tq;q)_{l+n}} \lim_{\tau\to 0} \frac{(tq^{-1}\tau^{-1};q)_{l-n}}{(\tau^{-1};q)_{l-n}}.
	\end{align*}
	Finally, using the relation
	\begin{align*}
		\frac{(a;q)_n}{(b;q)_n} = \frac{(q^{1-n}/a;q)_n}{(q^{1-n}/b;q)_n} \left(\frac{a}{b}\right)^n,
	\end{align*}
	the above limit becomes $(tq^{-1})^{l-n}$, producing the right-hand side of \eqref{eq:key-id-1}.
\end{proof}

\section{Lifting to a conjugate WP-Bailey pair}\label{sec:conjugate-WP-Bailey}

Here we show the conjugate WP-Bailey pair claimed in Theorem~\ref{th:conjugate-WP-Bailey}; the proof is similar to that for Theorem~\ref{th:conjugate-Bailey}. In particular, this time we use the fact that the continuous $q$-ultraspherical polynomials $C_n(z,s;q)$ also form an orthogonal basis of symmetric Laurent polynomials in $z$. Then it is possible to write
\begin{align*}
	\frac{(sz^2,sz^{-2};q)_\infty}{(t z^2, t z^{-2};q)_\infty} C_{2n}(z,t;q) = \sum_{l\ge 0} c'_{n,l}\,  C_{2l}(z,s;q).
\end{align*}
Applying the orthogonality of continuous $q$-ultraspherical polynomials \cite[p.~327, Theorem~13.2.1]{Ism2009}:
\begin{align*}
	\int_0^\pi C_n(z,s;q) C_{n'}(z,s;q) \frac{(z^2, z^{-2};q)_\infty}{(s z^2, s z^{-2};q)_\infty} \dd\theta = \frac{2\pi (1-s) (s^2; q)_n (s, sq; q)_\infty}{(1-sq^n) (q; q)_n (q, s^2; q)_\infty} \delta_{n,n'},
\end{align*}
the coefficients $c'_{n,l}$ can be identified by
\begin{align*}
	c'_{n,l} &= \frac{(1-sq^{2l}) (q; q)_{2l} (q, s^2; q)_\infty}{2 (1-s) (s^2; q)_{2l} (s, sq; q)_\infty} \int_0^\pi C_{2n}(z,t;q) C_{2l}(z,s;q) \frac{(z^2, z^{-2};q)_\infty}{(t z^2, t z^{-2};q)_\infty} \frac{\dd\theta}{\pi}\\
	&= \frac{(1-sq^{2l}) (q; q)_{2l} (q, s^2; q)_\infty}{2 (1-s) (s^2; q)_{2l} (s, sq; q)_\infty} \sum_{j=0}^{2l} \frac{(s;q)_j (s;q)_{2l-j}}{(q;q)_j (q;q)_{2l-j}}\, I_{j-l-n,n},
\end{align*}
where $I_{j-l-n,n}$ is as in \eqref{eq:Imn-def}. It follows from \eqref{eq:I-expression} and \eqref{eq:S-expression}, with a similar symmetry to that in \eqref{eq:S-symmetry} used, that
\begin{align*}
	c'_{n,l} &= \frac{t^{-l-n} (1-sq^{2l}) (q; q)_{2l} (t^2;q)_{2n} (t, tq , s^2; q)_\infty}{(1-s) (s^2; q)_{2l} (q;q)_{2n} (t^2, s, sq; q)_\infty}\\
	&\quad\times \sum_{j=0}^{2l} \frac{(s;q)_j (s;q)_{2l-j} (q^{j-l-n};q)_{2n} (t^{-1};q)_{j-l-n} t^{j}}{(q;q)_j (q;q)_{2l-j} (t;q)_{j-l+n}}.
\end{align*}
For the summation on the right-hand side, we proceed in the same way as that for Lemma~\ref{le:key-id-1} by pairing the summands, and eventually reformulate it in terms of a ${}_6 \phi_5$ series:
\begin{align*}
	&\sum_{j=0}^{2l} \frac{(s;q)_j (s;q)_{2l-j} (q^{j-l-n};q)_{2n} (t^{-1};q)_{j-l-n} t^{j}}{(q;q)_j (q;q)_{2l-j} (t;q)_{j-l+n}}\\
	&\qquad = \frac{(s;q)_{l-n} (s;q)_{l+n} (q;q)_{2n} t^{l+n} (1-s^{-1}q) (1-q^{2n+1})}{(q;q)_{l-n}(q;q)_{l+n+1}(tq;q)_{2n} (1-s^{-1}q^{1-(l-n)})}\\
	&\qquad\quad\times {}_6 \phi_{5}\left(\begin{matrix}
		q^{2n+1},q^{n+\frac{3}{2}},-q^{n+\frac{3}{2}},t^{-1}q,s q^{l+n},q^{-(l-n)}\\
		q^{n+\frac{1}{2}},-q^{n+\frac{1}{2}},tq^{2n+1},s^{-1}q^{2-(l-n)},q^{l+n+2}
	\end{matrix};q,\frac{tq}{s}\right).
\end{align*}
Invoking \eqref{eq:6phi5}, we arrive at
\begin{align}\label{eq:c'-sum-evaluation}
	\sum_{j=0}^{2l} \frac{(s;q)_j (s;q)_{2l-j} (q^{j-l-n};q)_{2n} (t^{-1};q)_{j-l-n} t^{j}}{(q;q)_j (q;q)_{2l-j} (t;q)_{j-l+n}} = \frac{(st^{-1};q)_{l-n} (s;q)_{l+n} t^{2l}}{(q;q)_{l-n} (tq;q)_{l+n}}.
\end{align}
Substituting \eqref{eq:c'-sum-evaluation} into the expression for $c'_{n,l}$, we conclude that
\begin{align*}
	c'_{n,l} = \frac{(st^{-1};q)_{l-n} (s;q)_{l+n}}{(q;q)_{l-n} (tq;q)_{l+n}} \cdot \frac{t^{l-n} (1-sq^{2l}) (q;q)_{2l} (t^2;q)_{2n} (t,tq,s^2;q)_\infty}{(1-s)(s^2;q)_{2l}(q;q)_{2n}(t^2,s,sq;q)_\infty},
\end{align*}
which, in particular, vanishes when $l < n$. Therefore,
\begin{align*}
	&\frac{t^n (q;q)_{2n} (t^2,sz^2,sz^{-2};q)_\infty}{(t^2;q)_{2n} (t, tq, t z^2, t z^{-2};q)_\infty} C_{2n}(z,t;q)\\
	&\qquad = \sum_{l\ge n} \frac{(st^{-1};q)_{l-n} (s;q)_{l+n}}{(q;q)_{l-n} (tq;q)_{l+n}} \cdot \frac{t^{l} (1-sq^{2l}) (q;q)_{2l} (s^2;q)_\infty}{(1-s)(s^2;q)_{2l}(s,sq;q)_\infty}  C_{2l}(z,s;q).
\end{align*}
Finally, we replace $z$ with $z^{-1/2}$. The above identity becomes exactly the required relation \eqref{eq:WP-Bailey-conj-gamma} for conjugate WP-Bailey pairs.
	
\end{document}